\documentclass{article}
\usepackage{amsmath}
\usepackage{amssymb}
\usepackage[shortlabels]{enumitem}
\usepackage{amsthm, amsfonts}
\usepackage{framed,tabu,float,caption}
\usepackage{verbatim}
\usepackage[T1]{fontenc}
\usepackage{biblatex}
\usepackage[colorlinks = true, citecolor = blue, hypertexnames=false]{hyperref}
\usepackage{xcolor}
\usepackage{mathtools}
\hypersetup{
    colorlinks,
    linkcolor={red!70!black},
    citecolor={blue!70!black},
    urlcolor={blue!80!black}
}

\theoremstyle{plain}
\newtheorem{theorem}{Theorem}[section]

\newtheorem{lemma}[theorem]{Lemma}

\newtheorem{definition}[theorem]{Definition}
\newtheorem{fact}{Fact}
\newtheorem{claim}{Claim}
\newtheorem*{taskkey}{\hypertarget{task2}{Task 2'}}
\newtheorem*{Revextension}{\hypertarget{Extension}{Revisited Extension Lemma}}
\newtheorem*{Geo}{\hypertarget{GEO}{Revisited Geometrisation Lemma}}
\newtheorem*{Weak}{\hypertarget{CWeak}{Weaker Claim 1}}

\newtheorem{task}{Task}
\theoremstyle{definition}
\newtheorem*{definition*}{Definition}
\newtheorem*{remark}{Remark}

\newtheorem*{problem*}{Problem}
\newtheorem*{notation}{Notation}
\newtheorem*{theorem*}{Theorem}

\theoremstyle{remark}
\newtheorem{newclaim}{New Claim}

\DeclareMathOperator{\ad}{ad}
\DeclareMathOperator{\tr}{tr}
\DeclareMathOperator{\im}{im}

\DeclareMathOperator{\Aut}{Aut}
\DeclareMathOperator{\End}{End} 
\DeclareMathOperator{\Sym}{Sym} 
\DeclareMathOperator{\Alt}{Alt}
\DeclareMathOperator{\Supp}{Supp}
\DeclareMathOperator{\ustd}{std}
\DeclareMathOperator{\dc}{dc}
\DeclareMathOperator{\rstd}{\overline{std}}
\DeclareMathOperator{\sgn}{sgn}
\newcommand{\cU}{\mathcal{U}}
\DeclareMathOperator{\Ar}{Ar}
\DeclareMathOperator{\Ob}{Ob}
\newcommand{\Mod}{\mathbf{Mod}}
\newcommand{\generated}[1]{\left\langle#1\right\rangle}

\newcommand{\bN}{\mathbb{N}}
\newcommand{\bZ}{\mathbb{Z}}

\newcommand{\qoplus}{\mathbin{{\left(\mkern-2mu+\mkern-2mu\right)}}}

\begin{document}

\title{Minimal $\Alt$(6)-modules with an additive dimension in characteristic not 3}
\author{German Mata Gutiez}

\maketitle
\begin{abstract}
    We answer Remaining Task 2 from \cite{Chin_2026}. In particular, we classify the minimal faithful $\Alt(6)$-modules with an additive dimension in characteristic not 3. We also highlight the remaining work  needed to complete the classification of minimal faithful $\Alt(n)$- and $\Sym(n)$-modules with an additive dimension.
\end{abstract}
\section{Introduction}
The setting of modules with an additive dimension was introduced by Corredor, Deloro and Wiscons  in \cite{corredor2021} in order to unify and extend various aspects of representation theory that support a notion of dimension arising from model theory. In the article, the authors classify $\Sym(n)$- and $\Alt(n)$-modules of least dimension for $n$ large enough, thus solving the large $n$ version of the following:
\begin{problem*}[{\cite[Problem]{corredor2021}}]Identify the minimal faithful $\Sym(n)$- and $\Alt(n)$-modules of least dimension for all $n$.
\end{problem*}
 Further work is continued by Chin, Deloro, Wiscons and Yu in \cite{Chin_2026} where they worked on the $\Alt(n)$-case for $n\le 6$, determining the minimal dimensions and classifying certain cases. In particular, they classified the $\Alt(6)$-modules of minimal dimension in characteristic 3. In addition, in \cite{MCENERNEY2025389} McEnerney and Wiscons through the notion of a 3D rotation module finished the classification of Alt(5)-modules of minimal dimension.
 
However, said articles leave open the following:
\begin{problem*}
    
Identify the minimal faithful $\Alt(n)$-modules of least dimension for $n=7,8,9$ in characteristic 2 and for $n=6$ in characteristic not 3.
\end{problem*}

 We solve the latter, identifying minimal $\Alt(6)$-modules when the characteristic is not $3$. We do so extending the action to a $\Sym(6)$-one. This approach is taken from \cite{corredor2021}. Nevertheless, despite not being relevant to the classification, this extension is achieved only in minimal dimension. The minimal dimensions, known from \cite[Lemma~3.11]{Chin_2026}, are 4 when the characteristic is 2 and 5 when the characteristic is different from 2 and 3. We do so extending the action to a $\Sym(6)$-one.
\subsection{The result}
We now state the main result. As in \cite{Chin_2026} and \cite{corredor2021}, $\Mod(G,d,q)$ denotes  the class of $G$-modules that carry an additive dimension, are dim-connected and of characteristic $q$. Precise definitions as well as the notion of dc-irreducibility are found later in the text in \S \ref{s.ModAddDim}. The result complements the following theorem proven in \cite{corredor2021}. We give the previous and actualized versions.
\begin{fact}[{\cite[Theorem]{corredor2021}}] Let $G = \Alt(n)$ or $\Sym(n)$. Suppose $V \in \Mod(G, d, q)$ is faithful and dc-irreducible with $d < n$. Assume $n \ge 7$; if $G = \Alt(n)$ and $q = 2$, further assume $n\ge 10$.
Then there is a dim-connected submodule $L \le V$ such that the structure of $V$ falls
into one of the following cases:
\begin{center}
\tabulinesep = 1.2mm
\begin{tabu} {X[0.8,c,m]|[1pt]X[0.3,c,m]|[1pt]X[2,c,m]}
$q$  & $d$   & Structure of $V$ \\ \tabucline[2pt]{-}
$q > 0$ and $q\mid n$ & $n-2$ & isomorphic to $\rstd(n, L)$ or $\sgn \otimes\,   \rstd(n, L)$ \\ \tabucline[1pt]{-}
$q > 0$ and $q\nmid n$ & $n-1$ & isomorphic to $\rstd(n, L)$ or $\sgn \otimes  \rstd(n, L)$ \\ \tabucline[1pt]{-}
$q = 0$ & $n-1$ & covered by $\ustd(n, L)$ or $\sgn \otimes \ustd(n, L)$ 
\end{tabu}
\end{center}
Moreover, when $q = 0$, the kernel of the covering map is 
$\langle{\sum_{i = 1}^{n-1} (e_i-e_n)}\rangle\otimes K$, in usual notation, for some $K \leq \Omega_n(L)$.
\end{fact}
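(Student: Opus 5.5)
This fact is quoted from \cite{corredor2021}; here is how I would reprove it. The approach generalises the classical argument of Wagner and Dickson for minimal representations of symmetric groups. The dimension-theoretic features are handled by dc-irreducibility and by working with dim-connected images and kernels.

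First I fix the setting and reduce to $\Sym(n)$. For $G=\Alt(n)$, I would extend the action to $\Sym(n)$, possibly up to a twist by $\sgn$. This uses that $d<n$ is small relative to $n$: the stabiliser of a point behaves well, and an automorphism of $V$ can be built from a transposition via centraliser arguments. The hypothesis $n\ge 10$ for $q=2$ is exactly what this step needs.

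With a $\Sym(n)$-module in hand, I consider the involutions $\tau_{ij}=(i\,j)$. I would decompose $V$ under $\tau=\tau_{12}$ into the dim-connected subgroups $[V,\tau]$ and $C_V(\tau)$; in characteristic $2$, I instead use $[V,\tau]\le C_V(\tau)$. The key numerical step is to show that $\dim[V,\tau]$ is the least positive value of $\dim$, i.e. it equals the dimension of a minimal dim-connected piece $L$. Replacing $V$ by $\sgn\otimes V$ if necessary, the commuting involutions $\tau_{12},\tau_{34},\dots$ then have supports $[V,\tau_{2i-1,2i}]$ forming a direct sum, by additivity of dimension. If $\dim[V,\tau]$ were larger, I would count dimensions to get $d\ge \lfloor n/2\rfloor\cdot\dim[V,\tau]$. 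Combining this with a second count over a product of $3$-cycles, or over $\tau_{12}\tau_{34}$, should contradict $d<n$ once $n\ge 7$. I expect this dimension-counting step to be the main obstacle, especially ruling out the alternative where $C_V(\tau)$ is small.

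Once $[V,\tau_{ij}]=:L_{ij}$ is minimal, I set $L=[V,\tau_{12}]$. I would show that $\sum_{i<j}L_{ij}=V$ by dc-irreducibility. I would then identify $L_{ij}$ with $(e_i-e_j)\otimes L$ via transport along $\Sym(n)$, using the relation $L_{ik}\le L_{ij}+L_{jk}$, which comes from the $3$-cycle $(i\,j\,k)$ acting on $[V,\tau]$. This yields a surjection $\ustd(n,L)\to V$. Its kernel is a $\Sym(n)$-invariant subgroup of $\ustd(n,L)$ meeting each $(e_i-e_j)\otimes L$ trivially. A short argument shows such a kernel lies in the fixed-point part $\langle\sum(e_i-e_n)\rangle\otimes K$. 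That element has $n$-torsion, so $K\le\Omega_n(L)$.

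The case split then follows. In positive characteristic $q$, the quotient by the fixed line is nontrivial exactly when $q\mid n$, and dc-irreducibility forces taking it. This gives $\rstd(n,L)$ of dimension $n-2$ or $n-1$, respectively. In characteristic $0$, only a covering is obtained, with the kernel as stated.
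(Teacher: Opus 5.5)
The paper does not reprove this fact. It cites \cite{corredor2021}, whose proof runs Geometrisation Lemma, then Extension Lemma (Fact~\ref{oldext}), then Recognition Lemma (Fact~\ref{RL}). Your last paragraph is essentially the Recognition Lemma, and the kernel step is fine: $x-(i\,j)x\in\ker\varphi\cap(e_i-e_j)\otimes L=0$ forces all coordinates of $x$ to agree. However, the two steps carrying the real content are not proved.

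\textbf{The reduction from $\Alt(n)$ to $\Sym(n)$.} You give no mechanism here. ``An automorphism built from a transposition via centraliser arguments'' does not define an action of $(i\,j)$ on $V$. In \cite{corredor2021} the extension comes \emph{after} a geometric input: $[A_{\alpha^\perp},B_\alpha]=0$ for every bi-transposition $\alpha$. This is what the Geometrisation Lemma extracts from $d<n$ (and from $n\ge 10$ when $q=2$). One then builds the lines $L_{(ij)}=\im\ad_{(ij)(ab)}\circ\ad_{(ijk)}$ and shows they are nontrivial, independent of $a,b,k$, and centralised by $A_{\{i,j\}^\perp}$ (Claim~\ref{elemma:cl:L}). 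The action of the transpositions is defined through these lines. Some input of this kind is indispensable. Indeed, $\Alt(7)<\Alt(8)\cong\mathrm{GL}_4(2)$ acts faithfully and irreducibly on $\mathbb{F}_2^4$ with $d=4<7$. This action does not extend to $\Sym(7)$, since $\Alt(8)$ has no subgroup of index $4$.

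\textbf{The decisive count for $\Sym(n)$.} You leave this as ``should contradict''. The supporting assertion, that after a sign twist $B_{(12)},B_{(34)},\dots$ form a quasi-direct sum ``by additivity of dimension'', does not follow from additivity. It is essentially the conclusion you want. For $q\neq 2$, decompose $V$ under $\langle (12),(34),\dots\rangle$ via Fact~\ref{fact.Coprimality}; quasi-directness then says exactly that no weight with two or more $-1$'s occurs. For $q=2$ no such decomposition is available at all. Even granting quasi-directness, $\dim B_\tau\ge 2$ only gives $d\ge 2\lfloor n/2\rfloor$, which equals $n-1$ for odd $n$ and is no contradiction.

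\textbf{What recognition needs.} It needs more than $\dim B_\tau=1$. ``Transport along $\Sym(n)$'' gives a well-defined map $(e_i-e_j)\otimes L\to L_{ij}$ only if $[S'_{\tau^\perp},B_\tau]=0$, and you never state this. It is cheap once $\dim B_\tau=1$. For a $3$-cycle $\gamma\in S_{\tau^\perp}$, $\ad_\gamma(B_\tau)$ is dim-connected of dimension at most $1$. Dimension $1$ is excluded by Fact~\ref{CORR} if $q\neq 3$, and by nilpotence of $\ad_\gamma$ if $q=3$. Still, you should say so.

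The missing idea overall is the Geometrisation Lemma: a dimension argument producing the geometric condition (up to $\sgn$ for transpositions in $\Sym(n)$, and for bi-transpositions in $\Alt(n)$) on which both extension and recognition rest.
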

We now add our result.
\begin{theorem*}
    Let $G=\Alt(6)$ and denote by $\chi$ any outer automorphism of $\Alt(6)$. Suppose $V \in \Mod(G, d, q)$ is faithful and dc-irreducible with $d < 6$. Then there is a dim-connected submodule $L \le V$ such that the structure of $V$ falls into one of the following cases:
    \begin{center}
\tabulinesep = 1.2mm
\begin{tabu} {X[0.8,c,m]|[1pt]X[0.3,c,m]|[1pt]X[2,c,m]}
$q$  & $d$   & Structure of $V$ \\ \tabucline[2pt]{-}
$q=2$  & $4$ & isomorphic to $\rstd(6, L)$ or $\rstd(6,L)^\chi$\\ \tabucline[1pt]{-}
$q > 0$ and $q\nmid 6$ & $5$ & isomorphic to $\rstd(6, L)$ or $\rstd(6,L)^\chi$ \\ \tabucline[1pt]{-}
$q = 0$ & $5$ & covered by $\ustd(6, L)$ or $\ustd(6, L)^\chi$  
\end{tabu}
\end{center}
Here $\rstd(6,L)^\chi$ denotes the module obtained via the outer automorphism applied to $\Alt(6)$. Moreover, when $q = 0$, the kernel of the covering map is 
$\langle\sum_{i = 1}^{5} (e_i-e_6)\rangle\otimes K$, for some $K \leq \Omega_6(L)$.
\end{theorem*}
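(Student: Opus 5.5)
The plan is to reduce to the $\Sym(6)$ case by extending the $\Alt(6)$-action, and then to follow the strategy of \cite{corredor2021} for recognising $\rstd$ and $\ustd$.

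\textbf{Step 1: the dimension.} Since $q\neq 3$, \cite[Lemma~3.11]{Chin_2026} gives the minimal dimension: $d=4$ when $q=2$ and $d=5$ otherwise. We may therefore work throughout in exactly that dimension.

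\textbf{Step 2: restriction to an involution subgroup.} Consider $H=\langle (12)(34),(13)(24)\rangle\cong(\mathbb Z/2)^2$, or, when $q\neq 2$, the centraliser of a double transposition.
\begin{itemize}
\item For $q\neq 2$, decompose $V$ into dim-connected eigenspaces $V=\bigoplus_\lambda V_\lambda$ for $H$ (the weight decomposition available in $\Mod$ when $2$ is invertible). Then use additivity of dimension together with conjugation by elements normalising $H$ to pin down $\dim V_\lambda$.
\item For $q=2$, work instead with the commutator subgroups $[V,t]$ and the centralisers $C_V(t)$ of involutions $t$, and compute their dimensions.
\end{itemize}

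\textbf{Step 3: the two classes of involutions.} $\Alt(6)$ has two classes of $\Alt(4)$-subgroups, $\langle(123),(12)(34)\rangle$ and its image under $\chi$, which is a transitive one. The key dichotomy is to decide which of the two subgroups acts on $V$ with a fixed-point space of the "standard" size, namely $\dim C_V(\Alt(4))=d-3$ in the appropriate sense. After twisting by $\chi$ if necessary, we may assume it is the point stabiliser type. This accounts for the $^\chi$ alternative in the theorem.

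\textbf{Step 4: extending to $\Sym(6)$.} Next we extend the action to $\Sym(6)$. The plan is to define the action of a transposition $\tau=(12)$ on each eigenspace $V_\lambda$ by permuting eigenspaces and acting as $\pm1$ on the fixed ones, and then to check the Coxeter relations. The construction should be $L$-linear, where $L$ is a dim-connected piece such as $C_V(\Alt(\{3,4,5,6\}))$ or a line-type module. The minimality of $d$ forces these pieces to be "one-dimensional over $L$", and that is what makes the extension well defined.

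\textbf{Step 5: invoking the $\Sym(n)$ analysis.} Once $V$ is a $\Sym(6)$-module, we run the $\Sym(n)$ argument of \cite{corredor2021}, whose proof of the Fact works for $\Sym(n)$ for all $n\ge 5$ or with minor changes. Its output is that $V$ is isomorphic to $\rstd(6,L)$, possibly twisted by $\sgn$, or is covered by $\ustd(6,L)$ with the stated kernel when $q=0$. Restricting back to $\Alt(6)$ kills the $\sgn$ twist. Undoing $\chi$ then yields exactly the table.

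\textbf{Main obstacle.} The hardest step is Step 4 in characteristic $2$. There the eigenspace decomposition is unavailable, and the transposition has to be built from $[V,t]$ and $C_V(t)$ data. Verifying the braid relations with $(23)$ and $(12)(34)$ in a purely dimension-theoretic setting is delicate. My first attempt would be to define $\tau$ on $C_V((12)(34))$ and extend by the $\Alt(6)$-action, checking consistency using $\dim C_V((12)(34))=2$ and dc-irreducibility.
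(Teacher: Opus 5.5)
Your overall architecture (absorb a geometric dichotomy into $\chi$, extend to $\Sym(6)$, then recognise) matches the paper's. However, the steps that carry the content are asserted rather than proved, and the last step relies on a result that is not available.

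\textbf{Step 3 is the crux and has no argument.} Nothing in your sketch forces one of the two classes of $\Alt(4)$-subgroups to satisfy $\dim C_V(\Alt(4))=d-3$, and a priori neither class need do so. You also do not say how such a fixed-point count would feed into the extension. (A small correction: $\Alt(6)$ has a single class of involutions; what $\chi$ exchanges are $3$-cycles and bi-$3$-cycles.) The paper's substitute is the Revisited Geometrisation Lemma. For disjoint $3$-cycles $\gamma,\delta$, either $B_\gamma=B_\delta$ and $C_\gamma=C_\delta$, or $B_\gamma\le C_\delta$ and $B_\delta\le C_\gamma$. The first alternative cannot hold in both $V$ and $V^\chi$. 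To see this, compare $B=B_\gamma=B_\delta$ with $B_\varepsilon$ for $\varepsilon=\gamma\delta$, using $B_\varepsilon\le B_\gamma+B_\delta$, the fact that $\chi$ turns $\varepsilon$ into a $3$-cycle, and $d\le 5$. The cases $\dim B\le 3$ or $B=B_\varepsilon$ are handled as in \cite{Chin_2026}. After twisting, this gives $[A_{\gamma^\perp},B_\gamma]=0$ for every $3$-cycle, which is the hypothesis everything downstream uses.

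\textbf{Step 4 is a plan rather than a construction.} The eigenspaces are never identified well enough to say how $\tau$ acts on them, no relation is verified, and the case $q=2$ is left open. In characteristic $0$ your decomposition is only quasi-direct, so a piecewise definition must be reconciled on $0$-dimensional overlaps. Also, for $q\neq 2$ your candidate $C_V(\Alt(\{3,4,5,6\}))$ is $2$-dimensional in $\rstd(6,L)$, so it is not the right $L$.

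The paper does not build $\tau$ by hand. It notes that the proof of the Extension Lemma of \cite{corredor2021} goes through at $n=6$ once its Claim~1 holds. That is, one needs $L_{(ij)}=\im\ad_{(ij)(ab)}\circ\ad_{(ijk)}$ to be nontrivial, independent of $a,b,k$, and centralised by $A_{\{i,j\}^\perp}$. The paper proves this from:
$\dim B_\gamma=2$;
$\dim B_\alpha\in\{2,3\}$, with $\dim B_\alpha=2$ if $q=2$;
the plane intersections $B_{(ijk)}\cap B_{(ij\ell)}$;
the Weaker Claim~1.

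\textbf{Step 5 cannot invoke the $\Sym(n)$ theorem} of \cite{corredor2021} ``for $n\ge 5$ with minor changes''. That theorem needs $n\ge 7$, and the paper lists minimal $\Sym(5)$- and $\Sym(6)$-modules as still open. Indeed, twisting $\ustd(6,L)$ by the outer automorphism of $\Sym(6)$, which sends transpositions to triple transpositions, gives faithful $5$-dimensional modules with $[S_{\tau^\perp},B_\tau]\neq 0$. What survives at $n=6$ is only the Recognition Lemma. Its geometric hypothesis on transpositions must be delivered by the extension step itself, as in the ``Moreover'' clause of the Revisited Extension Lemma. Your Step 4 is not designed to produce it.
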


\subsection{Remaining work}

The main problem is not fully solved and some small $n$ cases remain. The work was divided in \cite{Chin_2026} into 4 different tasks; the first was solved in \cite{MCENERNEY2025389} and the second in the present article. We now state the remaining ones; for more details, refer to \cite[\S 1.3. Remaining work, Task 3,Task 4] {Chin_2026}.

\setcounter{task}{2}
\begin{task}
Classify the minimal $\Alt(n)$-modules for $n=7,8,9$ in characteristic 2.
\end{task}

\begin{task}
Classify the minimal $\Sym(n)$-modules for $n=5,6$ in characteristic not dividing $n$.
\end{task}

Furthermore, in this article, we revisited the Extension lemma (Fact~\ref{oldext}) to $n=6$ in minimal dimension and characteristic not 3. Is minimality required? (\cite[Theorem~2, (3)]{Chin_2026} shows that characteristic not $3$ is required). The lingering question is proposed as a new task.

\begin{taskkey}
    Can the \hyperlink{Extension}{Revisited Extension Lemma} be achieved in general dimension?
\end{taskkey}
\section{Background and preliminaries}

\subsection{Modules}\label{Modules}

Modules with an additive dimension were introduced in \cite{corredor2021}. We recall here some elementary tools taken from \cite{Chin_2026}.
\begin{notation}
Let $G$ be a group. Assume $V$ is a $G$-module and $g\in G$. We view $g$ as an element of $\Aut(V)$, the automorphism group of $V$;  $\End(V)$ denotes the endomorphism ring of $V$. When $g$ has finite order $m$, we define maps $\ad_g$ and $\tr_g$ in $\End(V)$ as follows:
\begin{itemize}
\item
$\ad_g := 1-g$;
\item
$\tr_g := 1 + g + \dots + g^{m-1}$.
\end{itemize}
We refer to these as the \textbf{adjoint} and \textbf{trace} maps. They commute.
If the module $V$ is clear from the context, we use notation $B_g = \ad_g(V)$ and $C_g = \tr_g(V)$ for the images of $V$ under these maps.

\end{notation}

\begin{fact}[{\cite[Lemma~2.1]{Chin_2026}}]\label{l.AdjointTrace}
Let $G$ be a group. Assume $V$ is a $G$-module and $g\in G$ has finite order. Then:
\begin{enumerate}
    \item\label{l.AdjointTrace.i:BTrivial} for $X\subseteq V$, $\ad_g(X) = 0$ if and only if $g$ centralizes $X$;
    \item\label{l.AdjointTrace.i:CCentral} $\ad_g(C_g) = 0$, so $C_g$ is centralized by $g$;
    \item\label{l.AdjointTrace.i:traceonB} $\tr_g(B_g) = 0$, so if $|g| = 2$, then $B_g$ is inverted by $g$;
    \item\label{l.AdjointTrace.i:BCIntersection} every element of $B_g \cap C_V(g)$ has order dividing $|g|$;
    \item\label{l.AdjointTrace.i:AdTrConjugation} for all $h\in G$, one has $h\circ \ad_g\circ h^{-1} = \ad_{hgh^{-1}}$ and $h\circ \tr_g\circ h^{-1} = \tr_{hgh^{-1}}$. 
\end{enumerate}
\end{fact}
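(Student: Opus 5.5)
The plan is to treat $\ad_g$ and $\tr_g$ as polynomials in $g$ inside the ring $\End(V)$, and to reduce every item to one algebraic identity together with the defining relation $g^m = 1$, where $m = |g|$. First, $\ad_g$ and $\tr_g$ commute because both are polynomials in the single element $g$. Second, the telescoping identity
\[
\ad_g\circ\tr_g \;=\; \tr_g\circ\ad_g \;=\; (1-g)(1+g+\dots+g^{m-1}) \;=\; 1-g^m \;=\; 0
\]
holds in $\End(V)$. No dimension-theoretic input is needed; everything happens inside the abelian group $V$ with its $G$-action.

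The items then follow in order.
\begin{enumerate}
\item For item~(\ref{l.AdjointTrace.i:BTrivial}), note that $\ad_g(x) = x - gx$. This vanishes for all $x\in X$ exactly when $g$ fixes every element of $X$.
\item Item~(\ref{l.AdjointTrace.i:CCentral}) is the identity $\ad_g\circ\tr_g = 0$. Combined with item~(\ref{l.AdjointTrace.i:BTrivial}), it shows that $g$ centralizes $C_g$.
\item For item~(\ref{l.AdjointTrace.i:traceonB}), the identity $\tr_g\circ\ad_g = 0$ gives $\tr_g(B_g)=0$. When $m=2$ we have $\tr_g = 1+g$, so $b + gb = 0$ for every $b\in B_g$, i.e.\ $gb=-b$.
\item For item~(\ref{l.AdjointTrace.i:BCIntersection}), take $v\in B_g\cap C_V(g)$. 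Since $g$ fixes $v$, we get $\tr_g(v) = v+gv+\dots+g^{m-1}v = mv$. On the other hand $v\in B_g$, so $\tr_g(v)=0$ by item~(\ref{l.AdjointTrace.i:traceonB}). Hence $mv=0$, and the order of $v$ divides $m=|g|$.
\item For item~(\ref{l.AdjointTrace.i:AdTrConjugation}), conjugation by $h$ is a ring automorphism of $\End(V)$ fixing $1$. It sends $g^i$ to $(hgh^{-1})^i$, and $hgh^{-1}$ has the same order $m$. Applying it to $1-g$ and to $\sum_{i<m} g^i$ gives $\ad_{hgh^{-1}}$ and $\tr_{hgh^{-1}}$ respectively.
\end{enumerate}

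There is no real obstacle. The only point needing care is item~(\ref{l.AdjointTrace.i:BCIntersection}), where two facts must be combined: $\tr_g$ acts as multiplication by $m$ on $C_V(g)$, and $\tr_g$ kills $B_g$. One should also check that $m$ is the exact order of $g$, so that $g^m=1$ and the trace in item~(\ref{l.AdjointTrace.i:AdTrConjugation}) uses the same $m$ for $g$ and for $hgh^{-1}$.
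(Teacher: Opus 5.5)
Your argument is correct. Items (1)--(5) all follow from the identity $\ad_g\circ\tr_g=\tr_g\circ\ad_g=1-g^m=0$ in $\End(V)$, from $\tr_g$ acting as multiplication by $m$ on $C_V(g)$, and from conjugation by $h$ being a unital ring automorphism of $\End(V)$; the paper itself gives no proof of this fact (it is quoted from \cite[Lemma~2.1]{Chin_2026}), so I cannot compare routes, but your direct computation is the natural elementary verification of it.
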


\begin{fact}[{\cite[Lemma~2.2]{Chin_2026}}]\label{l.PermutingBandC}
Let $G$ be a group. Assume $V$ is a $G$-module and $g\in G$ has finite order. Then:
\begin{enumerate}
    \item\label{i:BCInversion} $B_g = B_{g^{-1}}$ and $C_g = C_{g^{-1}}$;
    \item\label{i:BCConjugation} for all $h\in G$, one has $h(B_g) = B_{hgh^{-1}}$ and $h(C_g) = C_{hgh^{-1}}$.
\end{enumerate}
In particular, if $h$ centralizes or inverts $g$, then $B_g$ and $C_g$  are $\langle h\rangle $-invariant.

\end{fact}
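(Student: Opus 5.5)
The plan is to reduce both items to elementary identities in $\End(V)$. The only facts used are that $g$ and $h$ act as automorphisms of $V$, so they are bijective on $V$, and Fact~\ref{l.AdjointTrace}(\ref{l.AdjointTrace.i:AdTrConjugation}).

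For item (\ref{i:BCInversion}), let $m=|g|$; then $g^{-1}$ also has order $m$.

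First the traces. Since $g^{-k}=g^{m-k}$, the terms $g^{-k}$ for $0\le k\le m-1$ are exactly the terms $g^{j}$ for $0\le j\le m-1$, in a different order. Hence $\tr_{g^{-1}}=\tr_g$ as endomorphisms, and so $C_{g^{-1}}=C_g$.

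Next the adjoints. I would use the identity
\[
\ad_{g^{-1}} = 1-g^{-1} = -g^{-1}(1-g) = -g^{-1}\circ\ad_g .
\]
This gives $B_{g^{-1}} = g^{-1}(B_g)$, because the sign does not change the image (it is a subgroup). It remains to show that $B_g$ is $g$-invariant. The map $g$ commutes with $\ad_g$ and is surjective on $V$, so
\[
g(B_g)=\ad_g(g(V))=\ad_g(V)=B_g .
\]
Applying $g^{-1}$ to both sides gives $g^{-1}(B_g)=B_g$, hence $B_{g^{-1}}=B_g$.

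For item (\ref{i:BCConjugation}), I would invoke Fact~\ref{l.AdjointTrace}(\ref{l.AdjointTrace.i:AdTrConjugation}), which gives $\ad_{hgh^{-1}}=h\circ\ad_g\circ h^{-1}$. Since $h^{-1}$ is surjective on $V$,
\[
B_{hgh^{-1}} = h\bigl(\ad_g(h^{-1}(V))\bigr) = h(\ad_g(V)) = h(B_g).
\]
The same argument with $\tr$ in place of $\ad$ gives $C_{hgh^{-1}}=h(C_g)$.

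For the final assertion there are two cases.
\begin{itemize}
\item If $h$ centralizes $g$, then $hgh^{-1}=g$, and item (\ref{i:BCConjugation}) gives $h(B_g)=B_g$ and $h(C_g)=C_g$.
\item If $h$ inverts $g$, then $hgh^{-1}=g^{-1}$. Item (\ref{i:BCConjugation}) gives $h(B_g)=B_{g^{-1}}$, and item (\ref{i:BCInversion}) gives $B_{g^{-1}}=B_g$; likewise for $C_g$.
\end{itemize}
In either case $B_g$ and $C_g$ are $h$-invariant. Since $h$ is bijective on $V$, this also gives $h^{-1}(B_g)=B_g$ and $h^{-1}(C_g)=C_g$, so both subgroups are $\langle h\rangle$-invariant.

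The only step that needs care is the $g$-invariance of $B_g$ used in item (\ref{i:BCInversion}). It relies on $g$ commuting with $\ad_g$ and on $g$ being bijective; no finiteness of the module is needed anywhere.
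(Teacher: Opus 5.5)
Your proof is correct and complete. The identities $\tr_{g^{-1}}=\tr_g$ and $\ad_{g^{-1}}=-g^{-1}\circ\ad_g$, the $g$-invariance of $B_g$, and conjugation via Fact~\ref{l.AdjointTrace}(\ref{l.AdjointTrace.i:AdTrConjugation}) with surjectivity of $h^{-1}$ are exactly what is needed; the $g$-invariance step can even be done in one line as $g^{-1}(B_g)=\ad_g(g^{-1}(V))=B_g$. The paper gives no proof of its own here and simply quotes \cite[Lemma~2.2]{Chin_2026}, so there is nothing to compare against beyond noting that your direct check in $\End(V)$ is the standard argument.
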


\begin{notation}
Let $G$ be a group acting on an abelian group $V$. We define \[[\underbrace{G,\ldots,G}_n,V] = \sum_{g_1,\ldots,g_n\in G} \ad_{g_1}\circ \cdots \circ \ad_{g_n} (V).\]
This agrees with the usual definition of the commutator  $[G,\ldots,G,V]$ (with appropriate bracketing) in the corresponding group $G\ltimes V$.
\end{notation}

\subsection{Modules with an additive dimension}\label{s.ModAddDim}
We now recall the framework of modules with an additive dimension. The definitions are taken directly from \cite{corredor2021}. Motivation and examples can be found in \cite{corredor2021}. For examples in low dimensions, refer to \cite{Chin_2026}. Throughout, we use $\ker f$ and $\im f$ for the kernel and image of a morphism $f$.

\begin{definition}
A \textbf{modular universe} $\cU$ is a collection of abelian groups $\Ob(\cU)$ and homomorphisms $\Ar(\cU)$ between them  that satisfy the following closure properties.
\begin{itemize}
\item{}[\textsc{inverses}]
If $f \in \Ar(\cU)$ is an isomorphism, then $f^{-1} \in \Ar(\cU)$.
\item{}[\textsc{products}] If $V_1, V_2 \in \Ob(\cU)$ and $f_1, f_2 \in \Ar(\cU)$, then 
$V_1 \times V_2 \in \Ob(\cU)$, and 
	$\Ar(\cU)$ contains $f_1 \times f_2$,  the projections $\pi_i : V_1\times V_2 \rightarrow V_i$, and the diagonal embeddings $\Delta_k:V_1\rightarrow V_1^k$.
\item{}[\textsc{sections}] 
If $W \leq V$ are in $\Ob(\cU)$, then $V/W\in \Ob(\cU)$ and $\Ar(\cU)$ contains the inclusion $\iota:W\rightarrow V$ and quotient $p:V\rightarrow V/W$ maps.
\item{}[\textsc{kernels/images}] 
If $f:V_1 \rightarrow V_2$ is in $\Ar(\cU)$, then $\ker f, \im f \in \Ob(\cU)$, and for all $W_1,W_2 \in \Ob(\cU)$,
	\begin{itemize} 
	\item
	if $W_1 \leq \ker f$, the induced map $\overline{f}\colon V_1/W_1 \to V_2$ is in $\Ar(\cU)$;
	\item 
	if $\im f \le W_2 \le V_2$,  the induced map $\check{f}\colon V_1 \to W_2$ is in $\Ar(\cU)$.
	\end{itemize} 
\item{}[$\bZ$-\textsc{module structure}]
If $V \in \Ob(\cU)$, then $\Ar(\cU)$ contains the sum  map $\sigma \colon V\times V \to V$ and the multiplication-by-$n$  maps $\mu_n\colon V \to V$.
\end{itemize}
The groups in $\Ob(\cU)$ are called the \textbf{modules} of $\cU$ and the homomorphisms in $\Ar(\cU)$ its \textbf{compatible morphisms}.

If $V$ is a module in  $\cU$ and a group $G$ acts on $V$ by \emph{compatible} morphisms of $\cU$, we say that $V$ is a \textbf{$G$-module} in $\cU$.
\end{definition}

\begin{definition}\label{d.ModAddDim}
Let $\cU$ be a modular universe.
An \textbf{additive dimension} on $\cU$ is a function $\dim\colon \Ob(\cU) \to \bN$ such that for all $f\colon V \to W$ in $\Ar(\cU)$, 
\[\dim V = \dim \ker f + \dim \im f.\]

If $(\cU, \dim)$ is a modular universe with an additive dimension and $V \in \Ob(\cU)$, we call $V$ \textbf{a module with an additive dimension}, leaving $\cU$ implicit from context. 
\end{definition}

\begin{definition}
A module $V$ with an additive dimension is called \textbf{dim-connected} (for dimension-connected) if every proper submodule $W<V$ in $\cU$ satisfies $\dim W < \dim V$.
\end{definition}

\begin{fact}[{\cite[Connectedness properties]{corredor2021}}]
Let $(\cU, \dim)$ be a modular universe with an additive dimension, and let $V_1,V_2$ be modules in $\cU$.
\begin{enumerate}
\item
If $V_1$ is dim-connected and $f\colon V_1 \to V_2$ is compatible, then $\im f$ is dim-connected.
\item
If $V_1$ and $V_2$ are dim-connected, then so is $V_1 \times V_2$.
\item
If $V_1, V_2 \leq V$ and $V_1$ and $V_2$ are dim-connected, then so is $V_1+V_2$.
\end{enumerate}
\end{fact}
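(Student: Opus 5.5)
The plan is to prove the three items in order, each reducing to the additivity formula $\dim V = \dim\ker f + \dim\im f$ together with the closure properties of $\cU$. I will use two routine consequences throughout. First, if $W\le V$ in $\cU$, applying additivity to the quotient map $p\colon V\to V/W$ gives $\dim V = \dim W + \dim(V/W)$, so $\dim W\le \dim V$. Second, I assume, as is implicit in calling $\cU$ a universe of modules and morphisms, that compatible morphisms are closed under composition, so that restrictions $f\circ\iota$ of compatible maps to submodules are again compatible.

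\textbf{Step 1 (images).} Let $W<\im f$ be a proper submodule in $\cU$. The induced map $\check f\colon V_1\to\im f$ is compatible by [\textsc{kernels/images}]. Composing it with the quotient $\im f\to \im f/W$ gives a compatible map whose kernel is $f^{-1}(W)$, so $f^{-1}(W)\in\Ob(\cU)$. Since $f$ maps onto $\im f$ and $W\neq\im f$, the submodule $f^{-1}(W)$ is proper in $V_1$. Dim-connectedness of $V_1$ then gives $\dim f^{-1}(W)<\dim V_1$. Next, apply additivity to $f$ restricted to $f^{-1}(W)$, whose kernel is $\ker f$ and whose image is $W$:
\[\dim f^{-1}(W)=\dim\ker f+\dim W.\]
Comparing this with $\dim V_1=\dim\ker f+\dim\im f$ yields $\dim W<\dim\im f$, as required.

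\textbf{Step 2 (products).} Let $W\le V_1\times V_2$ be a submodule with $\dim W=\dim V_1+\dim V_2$; it suffices to show $W=V_1\times V_2$. Restrict the projection $\pi_2$ to $W$. Its image is $B=\pi_2(W)\le V_2$, and its kernel is $A\times 0$ with $A=\pi_1(W\cap(V_1\times 0))\le V_1$; all of these lie in $\cU$ by the closure axioms. Additivity gives $\dim W=\dim A+\dim B$. Since $\dim A\le\dim V_1$ and $\dim B\le \dim V_2$, both must be equalities. Dim-connectedness of $V_1$ and $V_2$ then forces $A=V_1$ and $B=V_2$. Thus $W$ contains $V_1\times 0$ and surjects onto $V_2$, hence $W=V_1\times V_2$.

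\textbf{Step 3 (sums).} The sum $V_1+V_2$ is the image of $\sigma\circ(\iota_1\times\iota_2)\colon V_1\times V_2\to V$, which is compatible by [\textsc{products}], [\textsc{sections}] and [$\bZ$-\textsc{module structure}]. By Step 2 the domain $V_1\times V_2$ is dim-connected, so by Step 1 its image $V_1+V_2$ is dim-connected.

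The only delicate point is bookkeeping rather than mathematics: one must check that every auxiliary group used, namely $f^{-1}(W)$, $A$ and $B$, and every auxiliary map is actually in $\cU$, so that additivity may be applied to it. I expect this, together with the tacit use of closure under composition, to be the main thing to get right.
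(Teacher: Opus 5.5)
Your proof is correct. The paper gives no proof of its own here; it only quotes the fact from \cite{corredor2021}. Your three steps are the standard argument: pull a proper $W<\im f$ back to the proper submodule $f^{-1}(W)<V_1$ and cancel $\dim\ker f$; count dimensions along $\pi_2|_W$ for products; and realise $V_1+V_2$ as the image of $\sigma\circ(\iota_1\times\iota_2)$. Your tacit use of closure under composition is legitimate, since the $\Ob/\Ar$ formalism is categorical and the paper itself relies on it whenever it treats maps such as $\ad_g=1-g$ or $\ad_\alpha\circ\ad_\gamma$ as compatible.
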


\begin{notation}
Let $G$ be a group. We use $\Mod(G)$ to denote the class of all \emph{dim-connected} $G$-modules with an additive dimension (each coming from its own dimensional universe); $\Mod(G, d)$ denotes the subclass of those modules of dimension exactly $d$.
\end{notation}

\begin{definition}
If $A_1,\ldots,A_n$ are submodules of a module $V$, the sum $\sum A_i$ is said to be \textbf{quasi-direct} if $\dim \sum A_i = \sum \dim A_i$, in which case we write $\sum A_i = A_1\qoplus \cdots\qoplus A_n$.
\end{definition}


\begin{fact}[{\cite[Coprimality Lemma]{corredor2021}}]\label{fact.Coprimality}
Let $V\in \Mod(G)$ for a group $G$. Suppose $g\in G$ has prime order $p$ and $V$ is $p$-divisible. Then $V = B_g \qoplus C_g$.
\end{fact}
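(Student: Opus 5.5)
The statement to prove is the Coprimality Lemma: if $g$ has prime order $p$ and $V$ is $p$-divisible, then $V = B_g \qoplus C_g$. The plan is to show that $B_g + C_g = V$ and that $B_g \cap C_g$ has dimension $0$. Dimensions then add, since $\dim(B_g + C_g) = \dim B_g + \dim C_g - \dim(B_g\cap C_g)$; this comes from additivity applied to the sum map $B_g\times C_g \to V$, whose kernel is isomorphic to $B_g \cap C_g$.

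\emph{Intersection.} Let $x\in B_g\cap C_g$. By Fact~\ref{l.AdjointTrace}(2), $C_g\le C_V(g)$, so $x\in B_g\cap C_V(g)$, and Fact~\ref{l.AdjointTrace}(4) gives $px=0$. Hence $B_g\cap C_g\le \ker \mu_p$. Since $V$ is $p$-divisible, $\mu_p$ is surjective, so $\dim\ker\mu_p = \dim V - \dim\im\mu_p = 0$. The intersection is a module in $\cU$, being the kernel of the compatible map $B_g\times C_g\to V$, $(b,c)\mapsto b-c$, so its dimension is $0$.

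\emph{Sum.} In $\End(V)$ there is the polynomial identity
\[
p = \tr_g + \ad_g\circ\Big(\sum_{k=1}^{p-1}(1+g+\dots+g^{k-1})\Big),
\]
because $\sum_{k=0}^{p-1} g^k = p - \sum_{k=1}^{p-1}(1-g^k)$ and $1-g^k=(1-g)(1+\dots+g^{k-1})$. Hence $pV\le C_g+B_g$. Since $pV=V$ by $p$-divisibility, $V=B_g+C_g$.

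Combining the two parts gives $\dim V=\dim B_g+\dim C_g$, that is, $V=B_g\qoplus C_g$. Dim-connectedness of $V$ is not even needed here; it only ensures that the summands are dim-connected, as images of $\ad_g$ and $\tr_g$. The main point requiring care is checking that $B_g\cap C_g$ is a module of the universe, so that the dimension count is legitimate; the kernel construction above handles this using the closure axioms.
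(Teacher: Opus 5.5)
Your proof is correct and complete. The intersection $B_g\cap C_g$ has exponent $p$ by Fact~\ref{l.AdjointTrace}, so it is $0$-dimensional because $\dim\ker\mu_p=0$. The identity $p=\tr_g+\ad_g\circ\sum_{k=1}^{p-1}(1+g+\dots+g^{k-1})$ then gives $V=pV\le B_g+C_g$. You are right that dim-connectedness is never used.

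The paper gives no proof of this statement; it is quoted as a Fact from \cite{corredor2021}. So there is no in-paper argument to compare against, and yours serves as a self-contained proof.
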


\begin{remark}\label{rem.dcComponent}
It follows from Fact~\ref{fact.Coprimality} that, under the same assumptions, $\dim C_V(g) = \dim C_g$; this is discussed in the Remarks following the proof of \cite[Coprimality Lemma]{corredor2021}. Since $\dim C_V(g)$ has a 
dim-connected submodule of equal dimension, it is unique, and we say that $C_g$ is the \textbf{dc-component} of $C_V(g)$, see \cite[Remarks, page 11]{corredor2021}. 
\begin{notation}
In general, for $B$ a submodule of $V$, denote by $B^{\dc}$ (if it exists) the dc-component of $B$.
\end{notation}
\end{remark}
\begin{notation}
For $G=\Sym(n)$ and $I \subset \{1,...,n\}$, we let $S_{I}\le \Sym(n)$ be the subgroup of permutations whose support is contained in $I$. For $\sigma \in G$ we let $S_\sigma= S_{\Supp(\sigma)}$ and $S_{\sigma^\perp}= S_{\Supp(\sigma)^c}$. For $G=\Alt(n)$, we define $A_{I}$,$A_\sigma$ and $A_{\sigma^\perp}$ similarly.
\end{notation} 
\begin{definition}\label{d.Characteristic}
Let $V$ be an abelian group. We say
\begin{itemize}
\item
 $V$ has \textbf{characteristic $p$}, for $p$ a prime, if $V$ is an elementary abelian $p$-group;
\item
$V$ has \textbf{characteristic $0$} if it is divisible, i.e.~if $V$ is $p$-divisible  for all primes $p$.
\end{itemize}
\end{definition}

\begin{notation}
Extending our earlier notation, we use $\Mod(G, d, p)$ to denote the subclass of $\Mod(G, d)$ consisting of those modules of characteristic $p$ (possibly equal to $0$).
\end{notation}

Not all modules have a well-defined characteristic, but it turns out that the ``irreducible'' ones do.

\begin{definition}\label{d.Irreducible}
Let $V\in \Mod(G)$ for some group $G$. We say that $V$ is \textbf{dc-irreducible} (for ``dimension-connected-irreducible''), if it has no non-trivial, proper, dim-connected $G$-submodule.
\end{definition}

\begin{fact}[{see \cite[Characteristic Lemma]{corredor2021}}]\label{fact.Characteristic}
Let $V\in \Mod(G)$ for some group $G$. If $V$ is dc-irreducible, then $V$ has a characteristic.
\end{fact}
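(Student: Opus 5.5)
The plan is to test each prime $p$ separately, using the multiplication-by-$p$ map and dc-irreducibility, and then to split according to whether some prime kills $V$ or none does.

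Fix a prime $p$ and consider the multiplication-by-$p$ map $\mu_p\colon V\to V$. By the [$\bZ$-\textsc{module structure}] axiom it is a compatible morphism. Since $G$ acts by group endomorphisms of the abelian group $V$, every $g\in G$ commutes with $\mu_p$. Hence $pV=\im \mu_p$ is $G$-invariant, and by [\textsc{kernels/images}] it lies in $\Ob(\cU)$, so it is a $G$-submodule of $V$ in $\cU$. By the first connectedness property, the image of the dim-connected module $V$ under a compatible map is dim-connected. So $pV$ is a dim-connected $G$-submodule of $V$.

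By dc-irreducibility, $pV$ cannot be both non-trivial and proper, so either $pV=0$ or $pV=V$. This dichotomy holds for every prime $p$, and the conclusion follows by splitting into two cases:
\begin{itemize}
\item Suppose $pV=0$ for some prime $p$. Then $V$ is an abelian group of exponent $p$, that is, an elementary abelian $p$-group, so $V$ has characteristic $p$.
\item Suppose $pV=V$ for every prime $p$. Then $V$ is $p$-divisible for all primes $p$, hence divisible, so $V$ has characteristic $0$.
\end{itemize}
The degenerate case $V=0$ falls under both cases harmlessly.

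I do not expect a genuine obstacle here. The only point that needs care is checking that $pV$ really is an object of the universe $\cU$, that it is stable under $G$, and that it inherits dim-connectedness. These follow respectively from closure under images, from the commutation of $\mu_p$ with the action, and from the first connectedness property. Note also that the argument uses only the image $pV$ and never the kernel $V[p]$, which need not be dim-connected.
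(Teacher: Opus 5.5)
Your proof is correct. It is the standard argument behind the Characteristic Lemma of \cite{corredor2021}; the present paper gives no proof of its own and only cites that lemma. For each prime $p$, the image $pV=\im\mu_p$ is a $G$-invariant, dim-connected submodule in $\cU$, so dc-irreducibility forces $pV\in\{0,V\}$, and $V$ is therefore either an elementary abelian $p$-group or divisible.
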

\subsection{Known Results}
The following results are due to articles \cite{Chin_2026} and \cite{corredor2021}. Fact~\ref{CORR} is key for determining the dimension of certain submodules, namely the images of adjoint maps for $3$-cycles and bi-transpositions. Fact~\ref{MinDim} determines the dimension of minimal faithful $\Alt (6)$-modules. The Recognition lemma (Fact~\ref{RL}) finishes the identification of the modules once we have extended the action of $\Alt(6)$ to one of $\Sym(6)$. Fact~\ref{oldext} is adapted to the $\Alt(6)$-case in the \hyperlink{Exntension}{Revisited Extension Lemma}.

\begin{fact}[{\cite[Corollary 3.4.]{Chin_2026}}]\label{CORR}
Let $V \in \mathbf{Mod}(\Sym(3))$. If $\gamma \in \Sym(3)$ has order 3, then
any dim-connected, 1-dimensional $\Sym(3)$-submodule of $\ad_\gamma (V)$ has characteristic 3. In particular, if V is 3-divisible, then $\dim(\ad_\gamma(V))\ne 1$.

\end{fact}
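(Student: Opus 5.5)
We prove the first assertion directly; the ``in particular'' then follows at once. Let $W \le \ad_\gamma(V)$ be a dim-connected, $1$-dimensional $\Sym(3)$-submodule. The idea is to show that $\gamma$ centralizes $W$, and then to use that $\tr_\gamma$ kills $B_\gamma$ to get $3W=0$.

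The basic tool is a dichotomy for compatible endomorphisms of $W$. If $f$ is a compatible endomorphism of $W$, then $\im f$ is dim-connected, being the image of a dim-connected module, and it has dimension $0$ or $1$. If $\dim \im f = 0$, then $\im f = 0$, since a dim-connected module of dimension $0$ is trivial (its zero submodule has the same dimension). If $\dim \im f = 1$, then $\im f = W$, since $W$ is dim-connected of dimension $1$. So every compatible endomorphism of $W$ is either $0$ or surjective, and in the surjective case $\dim\ker f = 0$. We apply this to the adjoint maps of the transpositions of $\Sym(3)$, each of which leaves $W$ invariant.

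\emph{Step 1: $\gamma$ centralizes $W$.} Let $\tau$ be a transposition.
\begin{itemize}
\item If $\ad_\tau$ vanishes on $W$, then $\tau$ centralizes $W$ by Fact~\ref{l.AdjointTrace}(\ref{l.AdjointTrace.i:BTrivial}). Since $W$ is $\gamma$-invariant, the conjugates $\gamma^i\tau\gamma^{-i}$ also centralize $\gamma^i W = W$. These are all the transpositions of $\Sym(3)$, so $\gamma$, a product of two of them, centralizes $W$.
\item Otherwise $\ad_\tau(W) = W$. Since $|\tau| = 2$, we have $\tr_\tau\circ\ad_\tau = 1-\tau^2 = 0$, so $\tr_\tau(W) = \tr_\tau(\ad_\tau(W)) = 0$. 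Hence $\tau$ inverts $W$. By the same conjugation argument, every transposition inverts $W$, and again $\gamma$, as a product of two of them, acts trivially on $W$.
\end{itemize}

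\emph{Step 2: $3W = 0$.} Since $W \le B_\gamma$, Fact~\ref{l.AdjointTrace}(\ref{l.AdjointTrace.i:traceonB}) gives $\tr_\gamma(W) = 0$. But $\gamma$ acts trivially on $W$ by Step 1, so $\tr_\gamma$ restricts to multiplication by $3$ on $W$. Thus $3W = 0$, and $W$ is an elementary abelian $3$-group, that is, it has characteristic $3$ in the sense of Definition~\ref{d.Characteristic}.

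For the ``in particular'': suppose $V$ is $3$-divisible and $\dim \ad_\gamma(V) = 1$. Then $\ad_\gamma(V)$ is itself a dim-connected (image of $V$), $1$-dimensional submodule. It is $\Sym(3)$-invariant by Fact~\ref{l.PermutingBandC}, since every element of $\Sym(3)$ centralizes or inverts $\gamma$. By the first part it has characteristic $3$, so it is a nonzero $3$-torsion group. On the other hand, it is $3$-divisible as the image of the $3$-divisible group $V$, and a nonzero elementary abelian $3$-group is not $3$-divisible. This contradiction shows $\dim \ad_\gamma(V) \neq 1$.

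The only delicate point is the dichotomy ``$0$ or surjective'' for endomorphisms of $W$, which rests on dim-connectedness together with $\dim W = 1$. The rest is elementary bookkeeping with $\ad$ and $\tr$.
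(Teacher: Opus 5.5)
Your proof is correct and complete. The paper gives no proof of this statement; it is quoted from \cite{Chin_2026}, so there is no in-paper route to compare with. What you have written is a self-contained proof within the axioms of \S\ref{s.ModAddDim}.

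The decisive observation is the one you isolate: a compatible endomorphism of a dim-connected $1$-dimensional module is either $0$ or onto. Hence each transposition acts on $W$ as $+1$ or, via $\tr_\tau\circ\ad_\tau=0$, as $-1$. Conjugation by powers of $\gamma$ makes this sign the same for all transpositions, so $\gamma$ acts trivially.

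Your Step~2 is then precisely Fact~\ref{l.AdjointTrace}(\ref{l.AdjointTrace.i:BCIntersection}) applied to $W\le B_\gamma\cap C_V(\gamma)$. The ``in particular'' follows as you say, because $B_\gamma$ inherits $3$-divisibility from $V$.

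You are right that $\Sym(3)$-invariance of $W$, and not merely $\gamma$-invariance, is what is needed. It is what lets $\ad_\tau$ restrict to a compatible endomorphism of $W$, via the \textsc{kernels/images} axiom. Without the involution one cannot rule out $\gamma$ acting as a primitive cube root of unity on a $1$-dimensional module.
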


\begin{fact}[{\cite[Lemma~3.11.]{Chin_2026}}]\label{MinDim}If $V\in \textbf{Mod}(\Alt(6),d)$ is faithful, then  $d\ge3$. Further, $d =3 $ is only possible when $V$ has characteristic 3, and $d=4$ is only possible when
$V$ has characteristic 2 or 3.
\end{fact}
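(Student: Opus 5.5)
Although Fact~\ref{MinDim} is quoted from \cite{Chin_2026}, here is how I would prove it directly with the tools of \S\ref{Modules}--\S\ref{s.ModAddDim}. The first step is to reduce to the dc-irreducible case. Take a maximal chain of dim-connected $\Alt(6)$-submodules $0=V_0<V_1<\cdots<V_k=V$. Each factor $V_{i+1}/V_i$ is dim-connected and dc-irreducible, so by Fact~\ref{fact.Characteristic} it has a characteristic. Since $\Alt(6)$ is simple, if it acted trivially on every factor, then every $g\in\Alt(6)$ would act unipotently along the chain. This would make $\Alt(6)$ nilpotent, or, in characteristic $0$, would force every finite-order element to act trivially. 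Either way this contradicts faithfulness. So some factor $W$ is faithful, and $\dim W\le d$. If the characteristic constraint is to be inherited by $V$, I would also need to check that the faithful factor has the characteristic we end up excluding. I expect this bookkeeping to be manageable, because the interesting cases are when $V$ itself is dc-irreducible. From now on I assume $V$ is dc-irreducible of characteristic $q$.

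Next, assume $q\neq 3$, so that $V$ is $3$-divisible. Let $A=\langle \gamma,\gamma'\rangle\cong C_3\times C_3$ with $\gamma=(123)$ and $\gamma'=(456)$. Applying Fact~\ref{fact.Coprimality} to $\gamma$ and then to $\gamma'$ on the dc-components $B_\gamma$ and $C_\gamma$, which are $\gamma'$-invariant by Fact~\ref{l.PermutingBandC}, I would obtain a quasi-direct decomposition
\[V = C_A^{\dc}\qoplus V_{\langle\gamma\rangle}\qoplus V_{\langle\gamma'\rangle}\qoplus V_{\langle\gamma\gamma'\rangle}\qoplus V_{\langle\gamma\gamma'^{-1}\rangle}.\]
Here $V_K$ denotes the dim-connected piece centralised exactly by the order-$3$ subgroup $K$; for example, $V_{\langle\gamma'\rangle}=\ad_\gamma\tr_{\gamma'}(V)$. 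The normaliser $N_{\Alt(6)}(A)$, of order $36$, permutes the four subgroups of order $3$ in two orbits, $\{\langle\gamma\rangle,\langle\gamma'\rangle\}$ and $\{\langle\gamma\gamma'\rangle,\langle\gamma\gamma'^{-1}\rangle\}$. Hence pieces in the same orbit have equal dimension.

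Each piece $V_K$ is invariant under a copy of $\Sym(3)$ in which an order-$3$ element not in $K$ acts fixed-point-freely. For instance, $(12)(45)$ normalises both $\langle\gamma\rangle$ and $\langle\gamma'\rangle$, and $\langle\gamma,(12)(45)\rangle\cong\Sym(3)$ acts on $V_{\langle\gamma'\rangle}\subseteq B_\gamma$. Fact~\ref{CORR} then forces $\dim V_K\neq1$ whenever $V_K\neq0$.

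Faithfulness of $\gamma$ means $B_\gamma\neq0$. If the first orbit contributes, then $B_\gamma\supseteq V_{\langle\gamma'\rangle}$ gives $d\ge 2+2=4$. If only the second orbit contributes, the same computation again gives $\ge4$. If both contribute, then $d\ge 8$, which is excluded when $d<6$. This yields $d\ge4$ when $q\ne3$, and in particular proves both that $d\ge3$ and that $d=3$ forces $q=3$. If $q=3$, the lower bound $d\ge3$ needs a separate argument: $\dim B_\gamma\ge1$ for each $3$-cycle, and $d\le2$ would give an embedding of $\Alt(6)$ into a rank-$2$ setting. I would rule this out by showing that the commuting $3$-cycles $(123)$ and $(456)$ together with an involution swapping them cannot fit in dimension $2$, using the chain $B_\gamma\supseteq[\gamma,B_\gamma]$.

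Finally, to exclude $d=4$ when $q\notin\{2,3\}$, I would use an involution $t=(12)(34)$. The module is now also $2$-divisible, so Fact~\ref{fact.Coprimality} gives $V=B_t\qoplus C_t$. In the extremal configuration above, $V$ is exactly $V_{\langle\gamma\rangle}\qoplus V_{\langle\gamma'\rangle}$ with each piece of dimension $2$, or the analogous sum for the second orbit. I would then compute $\dim B_t$ in two ways. On one hand, conjugating $t$ by $\gamma$ and using Fact~\ref{l.AdjointTrace}(5) together with the Klein four-group $\{1,(12)(34),(13)(24),(14)(23)\}$, which decomposes $V$ into three $t$-type eigenpieces of equal dimension permuted by $(234)$ plus a centraliser piece, forces $3\mid \dim B_{(12)(34)}+\dim B_{(13)(24)}+\dim B_{(14)(23)}$ in a controlled way. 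On the other hand, the $A$-decomposition forces $\dim B_t$ to be even. I expect these two constraints to be incompatible with $d=4$.

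The main obstacle is exactly this last step: making the involution count rigorous in the quasi-direct setting, where we only have dc-components rather than genuine eigenspaces. The $3$-cycle lower bound, by contrast, is fairly mechanical once Fact~\ref{CORR} is applied to the right $\Sym(3)$-subgroups.
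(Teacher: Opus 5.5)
\textbf{Gap: the exclusion of $d=4$ for $q\neq 2,3$.} Your $C_3\times C_3$ analysis for $q\neq 3$ is sound. It is a cleaner substitute for the disjoint-$3$-cycle dichotomy of the cited proof, which the paper recalls in its Revisited Geometrisation Lemma: the alternative $B_\gamma=B_\delta$, $C_\gamma=C_\delta$ is exactly your second orbit. But your final step does not work, because the two constraints you propose hold automatically. The three involutions of $K=\{1,(12)(34),(13)(24),(14)(23)\}$ are conjugate, so $\sum_i\dim B_{t_i}=3\dim B_t$ is trivially divisible by $3$. The Klein decomposition $V=C_K^{\dc}\qoplus V_1\qoplus V_2\qoplus V_3$, with $\dim V_i=m\ge 1$ by faithfulness, gives $\dim B_t=2m$, so evenness is automatic too. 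For $d=4$ this only says $m=1$ and $\dim C_K^{\dc}=1$, which is consistent, so no contradiction can come from this count.

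What you actually need is $C_A^{\dc}\neq 0$, and your own ingredients give it. Suppose $x$ has order $3$ and permutes the involutions of a Klein four-group cyclically. Then the kernel of $\tr_x$ on $V_1$ lies in $V_1\cap(V_2+V_3)$, which has dimension $0$. Hence $\dim C_V(x)\ge \dim \tr_x(V_1)=m\ge 1$. Both classes of elements of order $3$ in $\Alt(6)$ do this: $(123)$ normalises $K$, and $(135)(246)$ normalises $\{1,(12)(34),(34)(56),(12)(56)\}$. In your decomposition,
$\dim C_V(\gamma)=\dim C_A^{\dc}+\dim V_{\langle\gamma\rangle}$ and $\dim C_V(\gamma\gamma')=\dim C_A^{\dc}+\dim V_{\langle\gamma\gamma'\rangle}$.
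If both orbits contribute, then $d\ge 8$. Otherwise one of these two centralisers has dimension exactly $\dim C_A^{\dc}$. This forces $\dim C_A^{\dc}\ge 1$, hence $d\ge 1+4=5$.

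\textbf{Two smaller gaps.} First, coprimality applied to $\gamma$ and $\gamma'$ yields only four pieces. To split $\ad_\gamma\ad_{\gamma'}(V)$ into the $\langle\gamma\gamma'\rangle$- and $\langle\gamma\gamma'^{-1}\rangle$-pieces, you must also apply it to $\gamma\gamma'$ and use the identity $\ad_{\gamma\gamma'^{-1}}\ad_{\gamma\gamma'}\ad_\gamma\ad_{\gamma'}=0$ in $\bZ[C_3\times C_3]$ (check it on characters). Without this, the second-orbit case only gives $d\ge 2$. Second, for $q=3$ your bound $d\ge 3$ is an intention rather than an argument (``cannot fit in dimension $2$'' proves nothing). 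The Klein decomposition above gives $d\ge 3m+\dim C_K^{\dc}\ge 3$ whenever $q\neq 2$, which settles it. The case $q=2$ is already covered by your bound $d\ge 4$.
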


\begin{fact}[{\cite[Recognition lemma]{corredor2021}}] \label{RL}
Let $n\ge 1$, $S := \Sym(n)$, and $V \in \Mod (S, d, q)$ be faithful and dc-irreducible. 
Suppose that for any transposition $\tau$, one has $[S'_{\tau^\perp}, B_\tau] = 0$.

Then for some abelian group $L$ and arrow $\varphi$ in $\cU$, there is a surjective morphism $\varphi\colon \ustd(n, L) \twoheadrightarrow V$ of $S$-modules.
Moreover:
\begin{itemize}
\item
if $0 \neq q \mid n$, then $\ker \varphi = C_{\ustd(n, L)}(S)$ and $V \simeq \rstd(n, L)$;
\item
if $0\neq q \nmid n$, then $\ker \varphi = 0$ and $V \simeq \rstd(n, L) \simeq \ustd(n, L)$;
\item
if $q = 0$, then $\ker \varphi = \generated{c}\otimes K$ where $c = \sum_{i = 1}^{n-1} f_i$ and $K \leq \Omega_n(L)$ is $0$-dimensional.
\end{itemize}
\end{fact}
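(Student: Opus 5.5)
The statement to prove is the Recognition lemma (Fact~\ref{RL}); the plan below addresses it directly. The idea is to build the covering $\varphi$ by hand from one transposition. Set $\tau=(1\,2)$ and $L:=B_\tau=\ad_\tau(V)$. Since $L$ is the image of the compatible map $\ad_\tau$ on the dim-connected module $V$, it is a dim-connected module of $\cU$. By Fact~\ref{l.AdjointTrace}(3), $\tau$ inverts $L$. I would use the model $\ustd(n,L)=\{(l_1,\dots,l_n)\in L^n:\sum l_i=0\}$, which is generated by the elements $(e_i-e_j)\otimes l$. For an ordered pair $i\neq j$, choose $g_{ij}\in S$ with $g_{ij}(1)=i$ and $g_{ij}(2)=j$. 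Then define
\[\varphi\bigl((e_i-e_j)\otimes l\bigr):=g_{ij}(l)\in B_{(i\,j)}.\]
Concretely, $\varphi$ is the map $L^{n-1}\to V$, $(l_1,\dots,l_{n-1})\mapsto\sum_i g_{in}(l_i)$, using the basis $f_i=e_i-e_n$. This is a sum of compatible morphisms, hence an arrow of $\cU$.

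\textbf{Well-definedness.} The choice of $g_{ij}$ is ambiguous exactly up to the stabiliser of the ordered pair $(1,2)$, which is $S_{\tau^\perp}$. So I must show that $S_{\tau^\perp}$ acts trivially on $B_\tau$. The hypothesis $[S'_{\tau^\perp},B_\tau]=0$ gives this for the derived subgroup. It remains to handle one odd element, say $\sigma=(3\,4)$. This element commutes with $\tau$, so it normalises $B_\tau$ by Fact~\ref{l.PermutingBandC}.

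I expect this step to be the main obstacle. My first attempt is to compare $\sigma$ with the bi-transposition $\tau\sigma$ and with the conjugate $\rho\tau\rho^{-1}=\sigma$, where $\rho=(1\,3)(2\,4)$. Then $\rho$ swaps $B_\tau$ and $B_\sigma$, and on $B_\tau$ the actions of $\sigma$ and of $\rho\sigma\rho^{-1}=\tau$ can be related. From this I hope to derive that $\sigma$ acts on $B_\tau$ either trivially or as $-1$. In the second case I would replace $V$ by $\sgn\otimes V$ (equivalently, twist the construction by the sign), which is harmless for the conclusion. For small $n$, where $S'_{\tau^\perp}$ is trivial, I would check the possible actions directly.

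\textbf{Relations and surjectivity.} Antisymmetry, $\varphi((e_j-e_i)\otimes l)=-\varphi((e_i-e_j)\otimes l)$, holds because $g_{ji}$ may be taken to be $g_{ij}\tau$ and $\tau$ inverts $L$. The cocycle relation $(e_i-e_j)+(e_j-e_k)=e_i-e_k$ only has to be checked inside a copy of $\Sym(3)$ on $\{i,j,k\}$. It amounts to showing that $\tr_\gamma$ kills $B_\tau$ for the $3$-cycle $\gamma=(1\,2\,3)$, up to the identification above. This follows from the identity $\sum_{k}\gamma^k\ad_\tau\gamma^{-k}=\ad_{(12)}+\ad_{(23)}+\ad_{(13)}$ together with the previous step. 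Once these relations hold, $S$-equivariance is immediate from the definition via conjugation, and $\varphi$ is an $S$-morphism. Its image contains $L\neq0$; here $L\neq0$ because $V$ is faithful. The image is dim-connected, being the image of a dim-connected module, and it is $S$-invariant. By dc-irreducibility it therefore equals $V$.

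\textbf{Kernel.} The kernel $K_0=\ker\varphi$ is an $S$-submodule of $\ustd(n,L)$ with $\dim K_0=\dim\ustd(n,L)-\dim V$. I would show that the restriction of $\varphi$ to each $B_{(i\,j)}$ in $\ustd(n,L)$ is injective, since it is essentially the identity on $L$. It follows that $K_0$ meets every $B_{(i\,j)}$ trivially. Applying $\ad_{(i\,j)}$ to $K_0$ then forces $K_0\le C_{\ustd(n,L)}(S)$, which is the diagonal $\{(l,\dots,l):nl=0\}$.

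The three cases then follow. If $0\neq q\mid n$, irreducibility forces $K_0$ to be the full fixed module, so $V\simeq\rstd(n,L)$. If $0\neq q\nmid n$, the fixed module is $0$, so $\varphi$ is an isomorphism. If $q=0$, the fixed points are $\generated{c}\otimes\Omega_n(L)$, and $K_0=\generated{c}\otimes K$ for some $K\le\Omega_n(L)$, which is $0$-dimensional since $\Omega_n(L)$ is finite-exponent inside a divisible module.
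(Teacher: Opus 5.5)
The paper does not prove this Fact; it is quoted from \cite{corredor2021}, so your argument has to stand on its own. Your construction is the right one: $L:=B_{(1\,2)}$, $\varphi((e_i-e_j)\otimes l)=g_{ij}(l)$, surjectivity by dc-irreducibility, then $\ker\varphi\cap B_{(i\,j)}=0$ in $\ustd(n,L)$, hence $\ker\varphi\le C_{\ustd(n,L)}(S)$, then the three cases. But the step you flag as the main obstacle is mishandled.

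You read $S'_{\tau^\perp}$ as a derived subgroup. Under that reading the step cannot be completed, because the statement then fails at $n=4$. There the hypothesis is vacuous, and $V=\sgn\otimes\ustd(4,\mathbb{Q})$ (finite-dimensional $\mathbb{Q}$-spaces, linear dimension) is faithful and irreducible. Yet $\operatorname{Hom}_S(\ustd(4,L),V)=0$ for every $L$. Concretely, $B_{(1\,2)}=\{(a,a,b,c):2a+b+c=0\}$, and $(3\,4)$ acts on it with both eigenvalues $+1$ and $-1$, so your hoped-for dichotomy is false.

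The sign twist is not harmless either. The conclusion is about $\ustd(n,L)$, not $\sgn\otimes\ustd(n,L)$. Moreover, in $\sgn\otimes V$ one has $\ad_\tau=1+\tau=\tr_\tau$ (with $\tau$ on the right the original action). So the new $B_\tau$ is the old $C_\tau$, and the hypothesis is not inherited.

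The hypothesis must be read with the full pointwise stabiliser, $[S_{\tau^\perp},B_\tau]=0$. This is exactly what the Extension Lemma outputs before the Recognition Lemma is applied. With that reading, well-definedness is immediate, since two choices of $g_{ij}$ differ by an element of $S_{\tau^\perp}$.

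Second, your justification of the cocycle relation is inadequate. The identity $\sum_k\gamma^k\ad_\tau\gamma^{-k}=\ad_{(12)}+\ad_{(23)}+\ad_{(13)}$ is just conjugation and does not yield $\tr_\gamma(B_\tau)=0$; that vanishing genuinely uses dc-irreducibility and faithfulness. Here is one way to get it:
\begin{enumerate}
\item Since $\tau$ inverts $\gamma=(1\,2\,3)$, $\tr_\gamma$ commutes with $\ad_\tau$. So $M:=\tr_\gamma(B_\tau)=\tr_\gamma\ad_\tau(V)$ is dim-connected and lies in $B_\tau\cap C_\gamma$.
\item Hence $\tau$ acts on $M$ as $-1$, $\gamma$ acts trivially, and $S_{\tau^\perp}$ acts trivially by hypothesis.
\item These elements generate $S$: for $n\ge 4$, $\Sym(\{1,2,3\})$ and $\Sym(\{3,\dots,n\})$ generate $\Sym(n)$, and for $n=3$ use $\tau,\gamma$.
\item So $M$ is a dim-connected $S$-submodule centralised by $\gamma$. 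By dc-irreducibility $M\in\{0,V\}$, and faithfulness excludes $M=V$.
\end{enumerate}

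The same kind of argument should be made explicit when $0\neq q\mid n$. The fixed module $C_{\ustd(n,L)}(S)$ is the diagonal image of $L$, hence dim-connected. Its image in $V$ is centralised by $S$, so it is $0$, giving $\ker\varphi=C_{\ustd(n,L)}(S)$. With these repairs the rest of your plan goes through.
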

\begin{fact}[{\cite[Extension lemma]{corredor2021}}]\label{oldext} Let $n \ge 7$, $A := \Alt(n)$, and $V\in\textbf{Mod}(A, d, q)$ be faithful and dc-irreducible. Suppose that for any bi-transposition $\alpha$, one has $[A_{\alpha^\perp}, B_\alpha] = 0$. Then:
\begin{itemize}
    \item  if $q = 2$ there is a unique compatible action of $\Sym(n)$ extending the $\Alt(n)$-structure;
    \item if $q \ne 2$ there are exactly two such, obtained from each other by tensoring with the
signature.
\end{itemize}
Moreover, up to tensoring with the signature, the extension satisfies that for any transposition $\tau \in S := \Sym(n)$, one has $[S_{\tau^\perp}, B_\tau ] = 0$.
\end{fact}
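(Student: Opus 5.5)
The plan is to build the action of a single transposition, say $\tau=(12)$, out of the bi-transpositions $\alpha=(12)(kl)$ with $\{k,l\}\subseteq\{3,\dots,n\}$. We then extend to all of $\Sym(n)$ by conjugating with $A$ and check the Coxeter relations. Uniqueness is handled separately at the end by a Schur-type argument.

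\emph{Case $q\neq 2$.} First, $V$ is $2$-divisible, so the Coprimality Lemma (Fact~\ref{fact.Coprimality}) gives $V=B_\alpha\qoplus C_\alpha$ for every bi-transposition $\alpha$. For $\alpha=(12)(34)$ and $\beta=(12)(56)$, I would define
\[ X_{12}:=(B_\alpha\cap B_\beta)^{\dc}, \]
which should be the ``$(12)$-part'' of $B_\alpha$. The hypothesis $[A_{\alpha^\perp},B_\alpha]=0$ makes $X_{12}$ centralized by $A_{\{3,\dots,n\}}$. This lets one show three things:
\begin{enumerate}
\item $X_{12}$ does not depend on the auxiliary pair $\{5,6\}$;
\item $B_{(12)(34)}=X_{12}\qoplus X_{34}$, with the sum actually direct once torsion is controlled by Fact~\ref{l.AdjointTrace}(\ref{l.AdjointTrace.i:BCIntersection});
\item $(12)(34)$ acts as $-1$ on $X_{12}$.
\end{enumerate}
Here $n\ge 7$ is used to have a third disjoint pair and a $3$-cycle disjoint from $\{1,2,3,4\}$ in the relevant centralizers.

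I would then define $\tau$ to act as $-1$ on $X_{12}$ and as $+1$ on a complement $Y_{12}$, taken as the sum of $C_\alpha$ and the $X_{kl}$ with $\{k,l\}\cap\{1,2\}=\emptyset$. Equivalently, $\tau:=\alpha\circ\epsilon_{kl}$, where $\epsilon_{kl}$ negates $X_{kl}$. The thing to prove is $V=X_{12}\oplus Y_{12}$ as groups, which is where $2$-divisibility enters.

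\emph{Case $q=2$.} Here $B_\alpha\le C_V(\alpha)$ and there is no eigenspace splitting. I would instead define $\tau$ by a formula
\[ \tau(v)=v+\pi_{12}(\ad_\alpha v), \]
where $\pi_{12}$ is a projection of $B_\alpha$ onto $X_{12}$ obtained again from the commutator hypothesis and the decomposition of $B_\alpha$ into two $A$-conjugate pieces.

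\emph{Relations.} In both cases one sets $\tau_{ij}:=g\tau g^{-1}$ for $g\in A$ with $g(\{1,2\})=\{i,j\}$. This is well defined because $X_{12}$ is normalized by the stabilizer of $\{1,2\}$. One then verifies:
\begin{itemize}
\item $\tau_{ij}\tau_{kl}$ equals the given action of $(ij)(kl)$;
\item $\tau^2=1$;
\item the braid relations for adjacent transpositions hold, by evaluating both sides on the pieces $X_{ij}$.
\end{itemize}
By the Coxeter presentation this yields a $\Sym(n)$-action. It is compatible, since everything is built from compatible maps and dc-components. The property $[S_{\tau^\perp},B_\tau]=0$ is then immediate, since $B_\tau=X_{12}$ is centralized by $A_{\{3,\dots,n\}}$ and by the $\tau_{kl}$ with $k,l\ge3$.

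\emph{Uniqueness.} Suppose $\tau,\tau'$ are two extensions. Then $\zeta:=\tau'\tau^{-1}$ commutes with $A$ (both conjugate $A$ the same way), so it is a compatible $A$-endomorphism with $\zeta^2=1$. By dc-irreducibility, $\ker(\zeta-1)$ or $\ker(\zeta+1)$ has full dimension, and so $\zeta=\pm1$. Hence the two extensions differ by the signature, and these coincide when $q=2$.

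I expect the main obstacle to be well-definedness, namely showing that $X_{12}$ (respectively $\pi_{12}$) is independent of the auxiliary pair and that the sum defining $V=X_{12}\oplus Y_{12}$ has no overlap. In characteristic $2$ in particular, this needs careful use of the hypothesis on several overlapping bi-transpositions, and this is exactly where $n\ge 7$ is consumed.
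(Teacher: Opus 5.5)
You have the same architecture as the proof the paper inherits from CDW: attach a line to each transposition, show it is centralized by $A_{\{i,j\}^\perp}$ and independent of auxiliary choices, and build the transpositions from it. But the proposal leaves open exactly the step the paper isolates as Claim~1. You never show that $X_{12}=(B_{(12)(34)}\cap B_{(12)(56)})^{\dc}$ exists or is non-zero.

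The paper instead takes $L_{(ij)}=\im \ad_{(ij)(ab)}\circ\ad_{(ijk)}$, a bi-transposition cutting a $3$-cycle plane. This gives three things at once:
\begin{itemize}
\item it is an image of $V$, hence dim-connected;
\item it is non-zero by faithfulness, since otherwise $(ij)(ab)$ would centralize all the planes $B_{(ijt)}$, $B_{(abt)}$ and hence $[A,V]=V$;
\item it lies inside $B_{(ijk)}$.
\end{itemize}
For $q\neq 2$ your definition can be rescued. Since $\ad_\alpha$ is multiplication by $2$ on $B_\alpha$, one has $X_{12}=\im \ad_\alpha\circ\ad_\beta$. If this vanished, conjugating by $(35)(46)$ and $(13)(24)$ would make $(34)(56)=\alpha\beta$ both centralize and invert $B_\alpha$.

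For $q=2$, however, $\ad_{(12)(34)}\circ\ad_{(12)(56)}$ is identically zero already on $\rstd(n,L)$. Nothing then presents your $X_{12}$ as an image, and dc-components need not exist, so you have no control over its existence, dimension or non-triviality. Your ``torsion control'' needs the same link to $3$-cycle planes. An element $w\in X_{12}\cap X_{34}$ is $A$-central and inverted by $\alpha$, which gives only $2w=0$, and nothing at all when $q=2$. You also need $w\in B_{(123)}\cap C_V((123))$, so that Fact~\ref{l.AdjointTrace} gives $3w=0$ and hence $w=0$. Without this, your projection $\pi_{12}$ is not defined.

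For $q\neq 2$, your construction of $\tau$ aims at a false target: $V=X_{12}\oplus Y_{12}$ as groups fails in characteristic $0$. Take $V=\ustd(n,L)$ with $L$ divisible and with $2$-torsion (e.g.\ $L=\mathbb{C}^\times$). All the hypotheses hold, yet $X_{12}\cap Y_{12}\supseteq\{x(e_1-e_2):2x=0\}\neq 0$. What is actually needed is that this intersection has exponent $2$, and with $Y_{12}$ defined as a large sum you have no handle on it.

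Your characteristic-$2$ formula $\tau=1-\pi_{12}\circ\ad_{(12)(34)}$ avoids $Y_{12}$ altogether. Once $B_{(12)(34)}=X_{12}\oplus X_{34}$ is known, it gives a compatible involution in every characteristic, since $(\pi_{12}\ad_\alpha)^2=2\pi_{12}\ad_\alpha$.

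Checking relations ``on the pieces'' also requires knowing how $\tau_{ij}$ acts on the lines $X_{ik}$ that meet $\{i,j\}$ in one point. For instance, you need that $\tau_{ij}$ agrees there with $(ij)(ab)$ for $a,b\notin\{i,j,k\}$. This has to be proved, not read off.

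Finally, in the uniqueness argument $\zeta^2=1$ does not follow from $\zeta$ commuting with $A$; from $(\tau')^2=1$ you only get $\tau\zeta\tau^{-1}=\zeta^{-1}$. You can recover it this way: the line $X_{56}$ is $\zeta$-invariant and fixed pointwise by $\tau$. Restricting $(\tau')^2=1$ to $X_{56}$ gives $\zeta^2=1$ there, and you then spread this by $A$.
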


\section{The proof}

\begin{notation}
    
From now on, let $V$ be a dc-irreducible $\Alt(6)$-module of minimal dimension and characteristic $q$ (which is taken to be different from 3). Thanks to Fact \ref{MinDim} the dimension is either $4$ when $q=2$ or $5$ when $q\ne2,3$.
\end{notation}
We want to identify the minimal $\Alt(6)$-modules by extending the $\Alt(6)$-action to a $\Sym(6)$-one. The bulk of the proof is the \hyperlink{Extension}{Revisited Extension Lemma}, an $\Alt(6)$-modified version of Fact~\ref{oldext}.
The main modifications are as follows.
\begin{itemize}
    \item The restriction to modules of minimal dimension.
    \item A stronger geometric condition; for any 3-cycle $\gamma \in A$, $[A_{\gamma^\perp},B_\gamma]=0$ (proven in the \hyperlink{GEO}{Revisited Geometrisation Lemma}).
\end{itemize}  
The proof of the original lemma is based on six claims. Reading in detail  \cite[Extension lemma]{corredor2021} we remark that once the first claim is proven in the ($n=6$)-case the rest follows. Furthermore, extending the action allows us to conclude using the same reasoning as in \cite{corredor2021}. The \hyperlink{GEO}{Revisited Geometrisation Lemma} gives the necessary condition to apply the \hyperlink{Extension}{Revisited Extension Lemma}. After achieving the extension, the Recognition Lemma (Fact~ \ref{RL}) allows to conclude as it is still valid when $n=6$.

\subsection{Revisiting the Geometrisation lemma}
\begin{Geo} Let $V$ be a faithful dc-irreducible $\Alt(6)$-module of minimal dimension. Let $A=\Alt(6)$. Then,
    up to applying an outer automorphism, for every 3-cycle $\gamma $, one has $[A_{\gamma^{\perp}},B_\gamma]=0$.
\end{Geo}
The proof is based on modifying the argument of  \cite[Lemma~3.11]{Chin_2026}. The original proof gives a lower bound on the dimension of $A$-modules and is divided into several claims. We will adjust the reasoning from the second and fourth claims.
\begin{proof}We first modify \cite[Claim 2 p.~325]{Chin_2026} of the original article.

\setcounter{newclaim}{1}
\begin{newclaim}\label{Nclaim} Suppose $d\le 5$. If $\gamma,\delta \in A$ are disjoint 3-cycles then either:
    \begin{itemize}
        \item $B_\gamma=B_\delta$ and $C_\gamma=C_\delta$, or 
        \item $B_\gamma\le C_\delta$ and $B_\delta\le C_\gamma$.
    \end{itemize}
\end{newclaim}
    \begin{proof}
    It follows in the same way as \cite[Claim 2]{Chin_2026}.
    \end{proof}
      Denote by $\chi$ any outer automorphism of $\Alt(6)$ and $V'=V^\chi$. Remark that $V'$ is also a faithful dc-irreducible of minimal dimension. We now prove the condition $[A_{\gamma^{\perp}},B_\gamma]=0$ up to applying $\chi$. This amounts to ruling out the first case in New Claim \ref{Nclaim} in either $V$ or $V'$. Let $\gamma,\delta$ be two disjoint $3$-cycles. Also, notice $A_{\gamma ^\perp}=\langle \delta \rangle$. Suppose towards a contradiction $B_\gamma=B_\delta $ and $C_\gamma=C_\delta $. Notice the same holds in $V'$. As in the proof of  \cite[Claim~4,~p.~326]{Chin_2026} let $B:=B_\delta=B_\gamma$ and $\varepsilon := \gamma \delta $ the bi-$3$-cycle. Remark that if $\dim(B)\le 3$ or $B=B_\varepsilon$, the proof in \cite[Claim~4,~p.~326]{Chin_2026} still works, leading to a contradiction to faithfulness. Thus, $\dim(B) > 3$ and $B\ne B_\varepsilon$. We distinguish two cases.
    \begin{itemize}
        \item If $\dim(B)=\dim(V)$ then by dim-connectedness, $B=V$. After applying $\chi$, one has $B_\varepsilon'=V'$. But $B_\varepsilon' \le B_\delta' + B_\gamma' = B'$. This is a contradiction.
        \item Therefore, the remaining case is $\dim(B)=4$. Applying $\chi$, we can suppose $\dim(B_\varepsilon')=4$ and since $B_\varepsilon' \le B'$ we are either in the case $\dim(B')=5$ or $B_\varepsilon' = B'$. Both cases have been treated above. \qedhere 
    \end{itemize}
    
\end{proof}

\subsection{Revisiting the Extension lemma}
As stated, the lemma is analogous to the one in \cite[Extension lemma]{corredor2021}. Both extend the action of $\Alt(n)$ to one of $\Sym(n)$ thanks to the introduction of lines. The original proof is divided into several claims; the first gives the definition of lines and a geometric condition on them. Reading with care \cite{corredor2021}, once the first claim is proven in the case where $n=6$, the rest follows as for the other values. In order to prove Claim \ref{elemma:cl:L}, we calculate dimensions of key submodules and we go through an alternative definition of a line to conclude.
\begin{Revextension}\label{exten}
Let $V$ be a dc-irreducible $A:=\Alt(6)$-module of minimal dimension and characteristic not 3. Suppose that for any 3-cycle $\gamma$, one has $[A_{\gamma^\perp}, B_\gamma] = 0$.
Then:
\begin{itemize}
\item
if $q = 2$ there is a unique compatible action of $\Sym(6)$ extending the $\Alt(6)$-structure;
\item
if $q \neq 2,3$ there are exactly two such, obtained from each other by tensoring with the signature.
\end{itemize}
Moreover, up to tensoring with the signature, the extension satisfies that for any transposition $\tau \in S:=\Sym(6)$, one has $[S_{\tau^\perp}, B_\tau] = 0$.
\end{Revextension}
Like we said, only the proof of Claim~\ref{elemma:cl:L} of the original Extension Lemma (Fact~\ref{oldext}) needs to be adapted. We recall its statement.
\begin{claim}\label{elemma:cl:L}
Let $i \neq j$ be given. For distinct $a,b,k\notin \{i,j\}$, \[L_{(ij)}:= \im \ad_{(ij)(ab)} \circ \ad_{(ijk)} = (B_{(ij)(ab)} \cap B_{(ijk)})^{\dc}\] is nontrivial and independent of the choice of $a,b,k$. Also, $[A_{\{i, j\}^\perp},L_{(ij)}]=0$.
\end{claim}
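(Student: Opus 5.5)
The plan is to compute the dimensions of the relevant images $B_g$ using the geometric hypothesis $[A_{\gamma^\perp},B_\gamma]=0$ and minimality of $d$. I would then identify $L_{(ij)}$ with a submodule that is visibly symmetric in the four points outside $\{i,j\}$; this is the ``alternative definition of a line''.

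\emph{Step 1: $\dim B_\gamma=2$ for every 3-cycle $\gamma$.} Let $\gamma,\delta$ be disjoint 3-cycles. The hypothesis says that $\delta$, which generates $A_{\gamma^\perp}$, centralizes $B_\gamma$. By Fact~\ref{l.AdjointTrace}, this means $\ad_\delta(B_\gamma)=0$, so $B_\gamma\le C_V(\delta)$. Symmetrically, $B_\delta\le C_V(\gamma)$. Since $q\neq 3$, Fact~\ref{fact.Coprimality} gives $V=B_\delta\qoplus C_\delta$. Hence $B_\gamma\cap B_\delta\le B_\delta\cap C_V(\delta)$, and this intersection has exponent $3$, so it is $0$-dimensional. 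Therefore $\dim B_\gamma+\dim B_\delta\le d\le 5$. Since $\gamma$ and $\delta$ are conjugate, this gives $\dim B_\gamma\le 2$. Faithfulness and dim-connectedness give $B_\gamma\neq 0$, and Fact~\ref{CORR}, applied to a copy of $\Sym(3)$ in $A$ acting on $\{i,j,k\}$ via the inverting involutions $(ij)(ab)$, rules out $\dim B_\gamma=1$. So $\dim B_\gamma=2$. I would then show in the same way that the bi-transpositions $\alpha$ have $\dim B_\alpha=2$ when $q\neq2$, and handle the case $q=2$, $d=4$ by a direct count.

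\emph{Step 2: nontriviality and the intersection description.} Put $\alpha=(ij)(ab)$ and $\gamma=(ijk)$. Then $\alpha$ inverts $\gamma$, so $B_\gamma$ is $\alpha$-invariant by Fact~\ref{l.PermutingBandC}. Moreover $\ad_\alpha\circ\ad_\gamma(V)=\ad_\alpha(B_\gamma)$ lies in $B_\alpha\cap B_\gamma$, and it is dim-connected as the image of a dim-connected module. If it were $0$, then $\alpha$ would centralize $B_\gamma$. Conjugating by the stabilizer of $\gamma$ and using the centralization from Step 1, the whole group $\langle \alpha^{g}\rangle=A$ would centralize a nonzero submodule. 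Since $A$ acts faithfully and irreducibly, $V$ would then be centralized, a contradiction. Next, $\ad_\alpha$ restricted to $B_\gamma$ has a kernel $C_{B_\gamma}(\alpha)$ which is nontrivial in dimension: otherwise $\alpha$ inverts $B_\gamma$, and then $\langle\gamma,\alpha\rangle\cong\Sym(3)$ acts on $B_\gamma$ with dimension counts contradicting Fact~\ref{CORR}. I expect this to give $\dim L_{(ij)}=1$. Finally, $B_\alpha\cap B_\gamma$ is a proper subgroup of the $2$-dimensional $B_\gamma$, so it has dimension $1$ and its dc-component is $L_{(ij)}$.

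\emph{Step 3: independence and centralization; this is the main obstacle.} Write $\{a,b,k,l\}$ for the complement of $\{i,j\}$. First, $(abl)$ is disjoint from $(ijk)$, so it centralizes $B_{(ijk)}$ and hence $L_{(ij)}$. Applying the same argument to every choice of the data, $L_{(ij)}$ is centralized by all 3-cycles supported in $\{a,b,k,l\}$ that avoid $k$. Independence of the choice is therefore what remains, and it also yields the centralization by all of $A_{\{i,j\}^\perp}\cong\Alt(4)$, since the 3-cycles of $\Alt(4)$ generate it. For independence of the choice of $(ab)$, I would note that the bi-transpositions $(ij)(ab)$ with $\{a,b\}\subseteq\{a,b,k,l\}\setminus\{k\}$ differ by elements centralizing $B_{(ijk)}$. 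For independence of $k$, I would show that $L_{(ij)}$ equals the dc-component of $B_{(ijk)}\cap B_{(ijl)}$. Both sides are $1$-dimensional and dim-connected, so it suffices to prove that $\ad_\alpha(B_{(ijk)})\le B_{(ijl)}$. Here I would use the identity $(ijk)(ijl)^{-1}\in\langle (ij)(kl)\rangle$-conjugates, together with $B_{(ijk)}\le C_V((abl))$. The cleanest approach, which I would try first, is to show that $B_{(ijk)}+B_{(ijl)}$ is $3$-dimensional and is the sum of the $B_g$ over the 3-cycles $g$ in $\Alt(\{i,j,k,l\})$. The two planes then meet in a line. That line is invariant under all the relevant involutions fixing $i,j$ setwise, and so it coincides with $\ad_\alpha(B_{(ijk)})$. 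The characteristic $2$ case, where Fact~\ref{fact.Coprimality} is unavailable for involutions, will need separate dimension bookkeeping with $d=4$.
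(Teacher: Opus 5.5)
\textbf{There is a genuine gap in Step 3, in characteristic $q\neq 2$.}

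Your reductions there are fine. For fixed $k$, $(ij)(ab)$ and $(ij)(a\ell)$ differ by a $3$-cycle of $\Alt(\{a,b,\ell\})$, which centralizes $B_{(ijk)}$, so $\ad_{(ij)(ab)}(B_{(ijk)})$ does not depend on $\{a,b\}$. Once full independence is known, centralization by $A_{\{i,j\}^\perp}$ follows.

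Everything then rests on the inclusion $\ad_\alpha(B_{(ijk)})\le B_{(ij\ell)}$ for $\alpha=(ij)(ab)$, i.e.\ $\ad_\alpha(B_{(ijk)})=\ad_\alpha(B_{(ij\ell)})$. This is exactly the paper's Weaker Claim~1, the longest step of the proof, and you give no valid argument for it.
\begin{itemize}
\item That $P=B_{(ijk)}+B_{(ij\ell)}$ is $3$-dimensional and equals $[\Alt(\{i,j,k,\ell\}),V]$ is only asserted. Even nontriviality of $\mathcal{L}=B_{(ijk)}\cap B_{(ij\ell)}$ needs the paper's separate argument with $(463)=(453)(465)$.
\item More seriously, ``the line is invariant under the relevant involutions, so it coincides with $\ad_\alpha(B_{(ijk)})$'' is a non sequitur for $q\neq 2$. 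The decomposition $B_{(ijk)}=\ad_\alpha(B_{(ijk)})\qoplus\tr_\alpha(B_{(ijk)})$ gives two $\alpha$-invariant lines, one inverted and one centralized by $\alpha$. Invariance cannot tell which one $\mathcal{L}^{\dc}$ is.
\item Since $\alpha\cdot(ij)(k\ell)=(ab)(k\ell)\in A_{\{i,j\}^\perp}$ centralizes $\mathcal{L}$, the real question is whether $(ij)(k\ell)$ inverts $\mathcal{L}$. Answering that needs genuine information about $[\Alt(\{i,j,k,\ell\}),V]$.
\end{itemize}

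The case where $\alpha$ centralizes $\mathcal{L}^{\dc}$ is precisely what the paper must exclude, and it does so globally. Assuming $\ad_\alpha(B_{(ijk)})\cap\ad_\alpha(B_{(ij\ell)})$ is $0$-dimensional, it shows:
\begin{itemize}
\item $\dim B_{(12)(45)}=3$;
\item $\mathcal{L}_{(xy)}^{\dc}=\tr_{(xy)(wz)}(\mathcal{L}_{(xy)})$;
\item $C_{(xy)(wz)}=\mathcal{L}^{\dc}_{(xy)}\qoplus\mathcal{L}^{\dc}_{(wz)}$, and this contains the third line $\mathcal{L}^{\dc}_{(hj)}$.
\end{itemize}
Hence all the lines, which must sum to $V$, lie in a sum of five of them of dimension at most $4$, a contradiction.

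Your announced ``$\dim B_\alpha=2$ for $q\neq 2$, in the same way'' would close the gap, since the bad case forces $\dim B_\alpha=3$. But Step~1 has no analogue for bi-transpositions, because $A_{\alpha^\perp}$ is trivial in $\Alt(6)$; the paper only obtains $\dim B_\alpha\in\{2,3\}$.

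Conversely, characteristic $2$, which you flag as delicate, is the easy case. Since $\ad_\alpha^2=0$, every $\alpha$-invariant dim-connected line of $B_{(ijk)}$ lies in $C_{B_{(ijk)}}(\alpha)$, whose dc-component is $\ad_\alpha(B_{(ijk)})$. So your invariance argument works there once $\mathcal{L}$ is known to be nontrivial.

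\textbf{Two justifications in Step~2 are also wrong, though repairable.}
\begin{itemize}
\item Conjugating $\alpha$ by elements of $N_A(\langle\gamma\rangle)$, a group of order $18$, never leaves that group, so ``$\langle\alpha^g\rangle=A$'' is false. Instead, $\gamma\alpha\gamma^{-1}=(jk)(ab)$ would also centralize $B_\gamma$, and $(ij)(ab)\cdot(jk)(ab)=\gamma^{\pm1}$. So $\gamma$ would centralize $B_\gamma$, which is impossible since $B_\gamma\cap C_V(\gamma)$ has exponent $3$ and $q\neq 3$.
\item Likewise, $\alpha$ inverting $B_\gamma$ does not contradict Fact~\ref{CORR}, which only excludes dimension $1$. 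The actual contradiction is that $\alpha\gamma=\gamma\alpha\gamma^{-1}$ would then invert $B_\gamma$ as well, so $\gamma$ centralizes it.
\end{itemize}
With these fixes your unconditional $\dim\ad_\alpha(B_\gamma)=1$ is correct, but it does not address the difficulty above.
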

Despite the similarities with the one in \cite{corredor2021}, Claim~\ref{elemma:cl:L} will take considerably longer to prove due to the smaller size of the group.

\subsubsection{Dimension work}
\begin{lemma}\label{DIMW}\leavevmode
\begin{enumerate}[(1)]
    \item\label{1:DIMW}  For any 3-cycle $\gamma \in \Alt(6)$, one has $ \dim(B_\gamma)=2$.
    \item\label{3:DIMW} Let $\delta$ be a $3$-cycle disjoint from $\gamma$ and $\lambda$ be a $3$-cycle such that $\Supp(\lambda)\ne \Supp(\gamma),\Supp(\delta)$.
    \item\label{2:DIMW} For any bi-transposition $\alpha \in \Alt(6)$, one has:
    \begin{itemize}
        \item if $q =2$, then $\dim(B_\alpha)=2$,
        \item if $q \ne 2,3$, then $\dim(B_\alpha)\in\{2,3\}$.
    \end{itemize}
    \begin{itemize}
        \item If $q=2$, one has $V=B_\gamma \qoplus B_\delta$.
        \item If $q\ne 2,3$, one has $V=(B_\gamma \qoplus B_\delta)+ B_\lambda$.
    \end{itemize}
    \item\label{4:DIMW} Let $i,j,k,\ell$ all distinct and $a,b\not \in \{i,j,k,\ell\}$. Then:
    \[\dim(\ad_{(ij)(k\ell)}(B_{(ija)})=\dim(\ad_{(ij)(k\ell)}(B_{(k\ell b)})\ne 0.\]
\end{enumerate}
    \end{lemma}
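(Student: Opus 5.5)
The plan is to work throughout from the standing hypothesis $[A_{\gamma^\perp},B_\gamma]=0$, together with 3-divisibility of $V$ (since $q\neq 3$, so the Coprimality Lemma, Fact~\ref{fact.Coprimality}, applies to every 3-cycle) and dc-irreducibility. Item (2) is not a statement in itself; it only fixes the notation $\gamma,\delta,\lambda$ for the two decomposition claims stated after item (3), which I prove right after (1). The proof of (3) is the least certain part.

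\textbf{Item (1).} Let $\gamma,\delta$ be disjoint 3-cycles. Since $A_{\gamma^\perp}=\langle\delta\rangle$, the hypothesis gives $B_\gamma\le C_V(\delta)$. By Remark~\ref{rem.dcComponent}, $B_\gamma$ then lies in the dc-component $C_\delta$. Coprimality gives $\dim C_\delta=d-\dim B_\delta$, and $\dim B_\delta=\dim B_\gamma$ by conjugacy (Fact~\ref{l.PermutingBandC}). Hence $2\dim B_\gamma\le d\le 5$, so $\dim B_\gamma\le 2$.

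For the lower bound, $\dim B_\gamma=0$ would mean $\gamma$ centralizes $V$, contradicting faithfulness. To exclude $\dim B_\gamma=1$, take a bi-transposition inverting $\gamma$, for instance $(12)(45)$ for $\gamma=(123)$. Together with $\gamma$ it generates a copy of $\Sym(3)$, and $V$ is a dim-connected $\Sym(3)$-module. Then Fact~\ref{CORR} with 3-divisibility forbids $\dim\ad_\gamma(V)=1$. So $\dim B_\gamma=2$.

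\textbf{The decompositions.} The intersection $B_\gamma\cap B_\delta$ lies in $B_\delta\cap C_\delta$. That intersection is $0$-dimensional by Coprimality, so $B_\gamma+B_\delta$ is quasi-direct of dimension $4$.
\begin{itemize}
\item If $q=2$ then $d=4$, and dim-connectedness forces $V=B_\gamma\qoplus B_\delta$.
\item If $q\neq 2,3$ then $d=5$. Note that $W:=B_\gamma+B_\delta$ is invariant under the stabilizer $N$ of the partition $\{\Supp\gamma,\Supp\delta\}$, by Fact~\ref{l.PermutingBandC}. This $N$ has order $36$ and is maximal in $\Alt(6)$. Suppose $B_\lambda\le W$ for a 3-cycle $\lambda$ with a different support. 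Then $W$ is also invariant under ${}^{\lambda}N\neq N$, hence under $\langle N,{}^{\lambda}N\rangle=A$. So $W$ is a dim-connected proper $A$-submodule of dimension $4$, contradicting dc-irreducibility. Therefore $B_\lambda\not\le W$, and dimension counting gives $V=W+B_\lambda$.
\end{itemize}

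\textbf{Item (3).} Fix $\alpha=(ij)(k\ell)$. For the upper bound in odd characteristic, I would decompose $V$ under the Klein group $K=\{1,(ij)(k\ell),(ik)(j\ell),(i\ell)(jk)\}$. Coprimality applied to each involution gives $V=C_K\qoplus V_1\qoplus V_2\qoplus V_3$, where the three weight pieces are permuted by an element of order $3$ normalizing $K$ and so have equal dimension $m$. Then $3m\le 5$, so $m\le 1$, and $B_\alpha$ is the sum of two of the weight pieces.

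For the lower bound, pick a 3-cycle $\gamma$ inverted by $\alpha$. Then $B_\gamma$ is $\alpha$-invariant of dimension $2$, and $\alpha$ cannot centralize it: otherwise $\langle\gamma,\alpha\rangle\cong\Sym(3)$ would act on $B_\gamma$ with $\gamma$ of order 3 acting nontrivially and an inverting involution acting trivially, which is impossible since $\gamma=\alpha\gamma^{-1}\alpha$. This should give $\ad_\alpha(B_\gamma)$ and, via its conjugates, enough dimension in $B_\alpha$.

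\emph{I expect this item to be the main obstacle.} The Klein-group count naively suggests $\dim B_\alpha\le 2$ with even dimension. Reaching exactly the stated sets ($2$ when $q=2$, $\{2,3\}$ otherwise) will require care: in characteristic $2$ there is no Coprimality for $\alpha$, so I would instead bound $B_\alpha$ by combining $V=B_\gamma\qoplus B_\delta$ with the action of $\alpha$ on each summand when $\alpha$ swaps or normalizes $\Supp\gamma$ and $\Supp\delta$.

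\textbf{Item (4).} The equality of dimensions follows by conjugating with $g=(ikj\ell)(ab)\in\Alt(6)$. This $g$ fixes $\alpha$ and sends $(ija)$ to $(k\ell b)$, so by Fact~\ref{l.AdjointTrace}(5) it carries $\ad_\alpha(B_{(ija)})$ onto $\ad_\alpha(B_{(k\ell b)})$.

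For non-vanishing, suppose $\ad_\alpha(B_{(ija)})=0$, i.e.\ $\alpha$ centralizes $B_{(ija)}$.
\begin{itemize}
\item Conjugating by $g$ shows $\alpha$ also centralizes $B_{(k\ell b)}$.
\item Conjugating by $(ij)(ab)$, which commutes with $\alpha$, shows $\alpha$ centralizes $B_{(jib)}$ and $B_{(k\ell a)}$.
\end{itemize}
Thus $\alpha$ centralizes $B_\gamma+B_\delta$ for the disjoint pair $\gamma=(ija)$, $\delta=(k\ell b)$, and also $B_\lambda$ for $\lambda=(ijb)$, whose support differs from both. By the decompositions proved above, this sum is all of $V$, contradicting faithfulness.
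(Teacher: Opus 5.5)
\textbf{The genuine gap is item (3) for $q=2$.} You give only a plan for the upper bound, and your lower bound stops at ``enough dimension''. The missing observation is that in characteristic $2$ one has $\ad_\alpha^2=(1-\alpha)^2=2(1-\alpha)=0$. Take $\alpha=(12)(45)$, $\gamma=(123)$, $\delta=(456)$. Since $\alpha$ inverts $\gamma$ and $\delta$, the decomposition $V=B_\gamma\qoplus B_\delta$ gives $B_\alpha=\ad_\alpha(B_\gamma)\qoplus\ad_\alpha(B_\delta)$, with $\ad_\alpha(B_\gamma)\le B_\gamma$ and $\ad_\alpha(B_\delta)\le B_\delta$. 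The even permutation $(1425)(36)$ centralises $\alpha$ and sends $(123)$ to $(456)$, so both summands have the same dimension $t$. You get $t\ge 1$ from your non-centralising argument, or directly from faithfulness. Since $\ad_\alpha(B_\gamma)\le\ker(\ad_\alpha|_{B_\gamma})$, one also gets $2t\le\dim B_\gamma=2$. Hence $\dim B_\alpha=2t=2$. The paper runs the same count in the form: if $\dim B_\alpha>2$ then $t=2$, so $B_\alpha=V$ and $B_\alpha=\ad_\alpha(B_\alpha)=0$, a contradiction.

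\textbf{For $q\neq 2,3$ there is no obstacle}: your Klein-four decomposition already proves the claim. It gives $\dim V=c+3m$, with $c=\dim\tr_\alpha\tr_\beta(V)$ and $\dim B_\alpha=2m$. So $m\le 1$, while $m\ge 1$ by faithfulness, and therefore $\dim B_\alpha=2\in\{2,3\}$. The statement only asks for membership, so nothing needs to ``reach'' $3$. You in fact obtain a sharper value than the paper, which instead applies Coprimality for $\alpha$ to each of $B_\gamma,B_\delta,B_\lambda$ inside $V=(B_\gamma\qoplus B_\delta)+B_\lambda$.

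\textbf{The remaining items are sound.} Item (1) is the paper's argument. For the $q\neq 2,3$ decomposition you use maximality of $N\cong 3^2{:}4$, instead of the paper's route via $V=\sum_\mu B_\mu$ and transitivity of $N$ on the other supports. You should justify the invariance step: $\lambda(w)=w-\ad_\lambda(w)\in W+B_\lambda=W$, so $W$ is invariant under $\langle N,\lambda\rangle=A$. In (4) you conclude from the decompositions. The paper instead notes that $B_{(ija)}+B_{(ijb)}=[A_{\{k,\ell\}^\perp},V]$ and $B_{(k\ell a)}+B_{(k\ell b)}=[A_{\{i,j\}^\perp},V]$, which together give $[A,V]=V$. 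The paper's route is dimension-free, but yours is valid here because the decompositions were proved first.
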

\begin{proof}
    We prove \ref{1:DIMW}. First, by faithfulness, $\dim B_\gamma \ne 0$. In addition, Fact~\ref{CORR} implies $\dim(B_\gamma)\ne 1$. Suppose towards a contradiction $\dim(B_\gamma) >2$. Since $q\ne 3$, Fact~\ref{fact.Coprimality} implies $V= B_\delta \qoplus C_\delta $. In addition, by the \hyperlink{GEO}{Revisited Geometrisation Lemma}, $B_\gamma= \ad_\gamma(V)= \ad_\gamma(C_\delta)$; but: 

 \[ \dim(C_\delta)=\dim(V)-\dim(B_\delta)\le 5-3=2 \]
 yields a contradiction. Hence, $\dim (B_\gamma)=2$.
    
    We prove \ref{3:DIMW} and \ref{2:DIMW}. Let $\alpha=(12)(45),\gamma=(123),\delta=(456)$ and $\lambda=(126)$. First, by faithfulness,  $\dim B_\alpha \ne 0$ and $\dim C_\alpha \ne 0$.  Remark that since $\alpha$ inverts $\gamma,\delta,\lambda $, both $\ad_\alpha$ and $\tr_\alpha$ normalize each of $B_\gamma,B_\delta,B_\lambda$. In particular, $\dim(\ad_\alpha(B_\delta) \cap \ad_\alpha(B_\gamma))=0$. Since $\dim(B_\gamma)=2$, the \hyperlink{GEO}{Revisited Geometrisation Lemma} implies $\dim(B_\gamma \qoplus B_\delta ) = 4$. Now Apply $(12)(36)$ and $(14)(25)$ to see:
    \[\dim(\ad_{\alpha}(B_\gamma))=\dim(\ad_{\alpha}(B_\lambda))=\dim(\ad_{\alpha}(B_\delta)).\]
    
    \begin{itemize}
        \item Suppose $q=2$. Then, dim-connectedness and equality of dimensions yields $V=B_\gamma \qoplus B_\delta$. We have achieved the ($q=2$)-case of \ref{3:DIMW}. Moreover, $B_\alpha=\ad_\alpha(B_\gamma) \qoplus \ad_\alpha(B_\delta)$. 
        
        We now prove \ref{2:DIMW}. If $\dim( B_\alpha) > 2$. One has $\dim(\ad_\alpha(B_\gamma))=2=\dim(\ad_\alpha(B_\delta))$. Therefore, $\dim(B_\alpha)=4$ and $V=B_\alpha$. But $\alpha$ is an involution, which yields:
        \[B_\alpha =\ad_\alpha(V)=\ad_\alpha(B_\alpha)=0,\] a contradiction to faithfulness. Therefore, $\dim(B_\alpha)\le2$ and $\dim(\ad_\alpha(B_\gamma))=1=\dim(\ad_\alpha(B_\delta))$. Thus, $\dim( B_\alpha)=2$. We have achieved the ($q=2$)-case of \ref{2:DIMW}.

        \item  Suppose $q\ne 2,3$. Note that the sum of all $B_\mu$ for $\mu$ a 3-cycle, is $\Alt(6)$-invariant and dim-connected; therefore, by dc-irreducibility it is $V$. Since $\dim(B_\gamma \qoplus B_\delta ) = 4$, one has $V=(B_\gamma \qoplus B_\delta) + B_\nu$ for some $\nu$ 3-cycle such that $B_\nu\not \le B_\gamma \qoplus B_\delta $. In particular, $\Supp(\nu)\ne \Supp(\gamma),\Supp(\delta)$. Remark that applying a suitable permutation, we can actually choose $\nu$ as long as it satisfies $\Supp(\nu)\ne \Supp(\gamma),\Supp(\delta)$. We have achieved the ($q\ne2,3$)-case of \ref{3:DIMW}.
        
         Choose $\nu=(126)=\lambda$. One has $B_\alpha=(\ad_\alpha(B_\gamma )\qoplus \ad_\alpha(B_\delta)) + \ad_\alpha(B_\lambda)$; and similarly for $\tr_\alpha$. Since $q\ne 2$, we see $\dim(B_\alpha)\in\{2,3\}$. We have achieved the ($q\ne2,3$)-case of \ref{2:DIMW}.
    \end{itemize}
    Finally, we prove \ref{4:DIMW}. Apply $(ikj\ell )(ab)$ to see:
    \[\dim(\ad_{(ij)(k\ell)}(B_{(ija)})=\dim(\ad_{(ij)(k\ell)}(B_{(k\ell b)}).\]
    In addition, showing $\dim(\ad_{(ij)(k \ell)}(B_{(ija)})\ne0$, is equivalent to showing that $(ij)(k\ell)$ does not centralize $(ija)$. Suppose it does. Conjugating, $(ij)(k\ell)$ centralizes $B_{ijt}$ for all $t \not \in \{i,j,k, \ell\}$ and also $B_{k \ell t}$ for all $t \not \in \{i,j,k,\ell\}$. Therefore, $C_V((ij)(k\ell))$ contains both $[A_{\{i,j\}^\perp} , V ]$ and $[A_{\{k,\ell\}^\perp} , V ]$, hence all of $[A,V]$. This contradicts our assumptions of dc-irreducibility and faithfulness. 
\end{proof}
\begin{remark}
   \ref{4:DIMW} is independent of the dimension of $V$. This result is in fact taken from \cite[Claim 1, p.~22] {corredor2021}. 
\end{remark}
\subsubsection{Alternative definition of a line}\label{lines}
We give an alternative definition of a line as the intersection of two planes. In fact, the lines used in the proof of the Extension Lemma (Fact~\ref{oldext}) are the dc-components of these new ones.
\begin{lemma}\label{Oldlines}
    Let $i\ne j$ be given. For distinct $k,\ell \not \in \{i,j\}$,
    \[ \mathcal{L}_{(ij)}:= B_{(ijk)}\cap B_{(ij\ell)}\]
    is not zero dimensional, does not depend on the choice of $k,\ell$ and $[A_{\{i, j\}^\perp},\mathcal{L}_{(ij)}]=0$.
    
    Furthermore, for $x,y,w,z$ all distinct:
    \[\dim (\mathcal{L}_{(xy)}\cap \mathcal{L}_{(wz)})=0.\]
\end{lemma}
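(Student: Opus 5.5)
The plan is to exploit the \hyperlink{GEO}{Revisited Geometrisation Lemma}: for a $3$-cycle $\gamma$, the plane $B_\gamma$ is centralized by the $3$-cycles supported on $\Supp(\gamma)^c$. Lemma~\ref{DIMW}\ref{1:DIMW} gives $\dim B_\gamma=2$. Throughout, fix $i\neq j$ and write $\{i,j\}^c=\{k,\ell,a,b\}$.

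\emph{Centralization.} The complement of $\{i,j,k\}$ is $\{\ell,a,b\}$, so $(\ell ab)$ centralizes $B_{(ijk)}$. Likewise $(kab)$ centralizes $B_{(ij\ell)}$. Hence both centralize $\mathcal{L}_{(ij)}=B_{(ijk)}\cap B_{(ij\ell)}$. Since $(\ell ab)$ and $(kab)$ generate $\Alt(\{k,\ell,a,b\})=A_{\{i,j\}^\perp}$, we get $[A_{\{i,j\}^\perp},\mathcal{L}_{(ij)}]=0$ for this particular pair $k,\ell$.

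\emph{Independence.} Apply $(\ell ab)$. By Fact~\ref{l.PermutingBandC}, it sends $B_{(ijk)}$ to itself and $B_{(ij\ell)}$ to $B_{(ija)}$. It also fixes $\mathcal{L}_{(ij)}$ pointwise. Hence $B_{(ijk)}\cap B_{(ij\ell)}\le B_{(ijk)}\cap B_{(ija)}$, and the reverse inclusion follows by applying $(\ell ab)^{-1}$ in the same way. Combined with conjugation by elements of $A_{\{i,j\}^\perp}$, which fix $i,j$ and act transitively on ordered pairs from $\{k,\ell,a,b\}$, all the intersections $B_{(ijs)}\cap B_{(ijt)}$ coincide. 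In particular $\mathcal{L}_{(ij)}\le B_{(ijt)}$ for every $t\notin\{i,j\}$, and the centralization step applies to every choice.

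\emph{Non-triviality.} I expect this to be the main obstacle. Suppose $\dim\mathcal{L}_{(ij)}=0$. Then the four planes $B_{(ijt)}$ meet pairwise in $0$-dimensional subgroups. Set $\alpha=(ij)(k\ell)$. It swaps $B_{(ijk)}$ and $B_{(ij\ell)}$, since $\alpha(ijk)\alpha=(ij\ell)^{-1}$, so $W:=B_{(ijk)}\qoplus B_{(ij\ell)}$ is $4$-dimensional and $\alpha$-invariant.
\begin{itemize}
\item If $q=2$, then $W=V$ by dim-connectedness. I would then compare with Lemma~\ref{DIMW}, parts \ref{3:DIMW} and \ref{2:DIMW}: here $\alpha$ swaps two quasi-complementary planes, so $\dim B_\alpha=\dim C_\alpha=2$, and $\ad_\alpha$ would be a ``graph'' map. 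Using Lemma~\ref{DIMW}\ref{4:DIMW} applied to $\ad_{(ij)(k\ell)}(B_{(ija)})$, I would try to force $B_{(ija)}$ to meet $B_{(ijk)}$ or $B_{(ij\ell)}$ non-trivially, contradicting the assumption.
\item If $q\neq 2,3$, I would use the $\{i,j\}$-stabilizer, which is isomorphic to $\Sym(4)$. It permutes the four planes, so $U=\sum_t B_{(ijt)}$ is invariant and dim-connected and contains $[\Alt(\{i,j,k,\ell\}),V]$. A dimension count, using $\dim V=5$ and Fact~\ref{fact.Coprimality} for the $3$-cycles $(\ell ab)$, should then show that four such planes cannot all be pairwise almost-disjoint. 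Concretely, $B_{(ijk)}\le C_V((\ell ab))$ and $\dim C_{(\ell ab)}=3$, while $B_{(ij\ell)},B_{(ija)},B_{(ijb)}$ are moved by $(\ell ab)$.
\end{itemize}
If this counting does not close, the fallback is to adapt the argument of \cite[Claim 1]{corredor2021} as used in Lemma~\ref{DIMW}\ref{4:DIMW}.

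\emph{Disjointness of lines.} Let $x,y,w,z$ be distinct and choose $u\notin\{x,y,w,z\}$, which is possible since $n=6$. By independence, $\mathcal{L}_{(wz)}\le B_{(wzu)}$. Since $\{w,z,u\}\subseteq\{x,y\}^c$, the $3$-cycle $(wzu)$ lies in $A_{\{x,y\}^\perp}$, which centralizes $\mathcal{L}_{(xy)}$. Hence $\mathcal{L}_{(xy)}\cap\mathcal{L}_{(wz)}\le B_{(wzu)}\cap C_V((wzu))$. By Fact~\ref{l.AdjointTrace}\ref{l.AdjointTrace.i:BCIntersection}, this intersection has exponent dividing $3$, so it lies in $\ker\mu_3$. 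As $q\neq 3$, the map $\mu_3$ is surjective (in characteristic $0$ by divisibility) and in fact bijective in characteristic $q\neq 0$. Hence $\dim\ker\mu_3=0$, which gives $\dim(\mathcal{L}_{(xy)}\cap\mathcal{L}_{(wz)})=0$.
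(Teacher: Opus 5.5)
Your centralization, independence and disjointness steps are correct and agree with the paper. Your disjointness argument, via $(wzu)\in A_{\{x,y\}^\perp}$, is an equivalent variant of the paper's bound by $B_{(123)}\cap B_{(456)}$.

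The gap is non-triviality, which is the real content of the lemma, and neither bullet proves it. For $q=2$ you only say you would ``try to force'' $B_{(ija)}$ to meet another plane. For $q\neq 2,3$ the announced count is never carried out. The one concrete fact you record is the right starting point, and it holds for both values of $q$, so no case split is needed. Indeed, $(\ell ab)$ centralizes $B_{(ijk)}$, while $\dim C_V((\ell ab))=\dim V-2\le 3<4=\dim(B_{(ijk)}\qoplus B_{(ij\ell)})$. Hence $\dim\ad_{(\ell ab)}(B_{(ij\ell)})\ge 1$. On its own this contradicts nothing: you still need a way to turn ``a $3$-cycle moves a plane'' into ``two planes meet''.

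The missing idea is the following.

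\emph{Step 1.} Conjugating by $(ij)(k\ell)$ turns the inequality into $\dim\ad_{(kab)}(B_{(ijk)})\ge 1$.

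\emph{Step 2.} Composing right to left, $(ka\ell)=(kab)(a\ell b)$. By the Revisited Geometrisation Lemma, $(a\ell b)$ centralizes $B_{(ijk)}$. So $(kab)$ and $(ka\ell)$ act identically on $B_{(ijk)}$, and therefore
\[\ad_{(kab)}(B_{(ijk)})=\ad_{(ka\ell)}(B_{(ijk)})\le B_{(kab)}\cap B_{(ka\ell)}=\mathcal{L}_{(ka)}.\]

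\emph{Step 3.} $\mathcal{L}_{(ka)}$ is conjugate to $\mathcal{L}_{(ij)}$, which is immediate from your independence step. So it is $0$-dimensional under your hypothesis, contradicting Step 1.

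This is exactly the paper's proof, with $(i,j,k,\ell,a,b)=(1,2,3,6,4,5)$. The conjugating element is $(12)(36)$, and the key factorization is $(463)=(453)(465)$.
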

\begin{proof}

    By conjugacy, we may take $i,j,k,\ell$ to be $1,2,3,6$.  We start by showing non-triviality; suppose $\dim(B_{(126)}\cap B_{(123)})=0$. Then, after applying $(13524)$, one has:  \[\dim(B_{(453)}\cap B_{(463)})=0.\] 
    By Lemma~\ref{DIMW}~\ref{1:DIMW}, one has $\dim B_{(123)}=\dim B_{(126)}=2$, therefore: 
    \[\dim(B_{(123)}\qoplus B_{(126)})=4.\]
    Since $\dim(B_{(453)})=2$ and $\dim(V)\le5$, one gets $\dim(\ker(\ad_{(456)}))\le 3$. Combined with the \hyperlink{GEO}{Revisited Geometrisation Lemma} and $\dim(B_{(123)}\qoplus B_{(126)})=4$, this yields: 
    \[\dim(\ad_{(456)}(B_{(126)}))= \dim(\ad_{(456)}(B_{(123)}\qoplus B_{(126)}))\ge 1.\]
    In addition, after applying $(12)(63)$, one has:
    \[\dim(\ad_{(453)}(B_{(123)}))\ge 1.\]
    Applying $(56)(12)$, one also has:
    \[\dim(\ad_{(463)}(B_{(123)}))\ge 1.\]
    In addition, since $(463)=(453)(465)$, one has:
    \begin{align*}
        \ad_{(463)}(B_{(123)})&=(1-(453)(465))(B_{(123)})\\
                   &= [(1-(465)) + (1-(453))(465)](B_{(123)})\\
                   &\le \ad_{(456)}(B_{(123)}) + \ad_{(453)}(B_{(123)})=\ad_{(453)}(B_{(123)}).
    \end{align*}
    Therefore:
    \[\dim(B_{(463)}\cap B_{(453)})\ge 1,\]
    which is a contradiction.
    
     Furthermore, see that $A_{\{1,2\}^\perp}= \langle (456),(453) \rangle$ so by the \hyperlink{GEO}{Revisited Geometrisation Lemma}, $[ A_{\{1,2\}^\perp},\mathcal{L}_{(12)}]=0$. Since $A_{\{1,2\}^\perp}$ is $2$-transitive, $\mathcal{L}_{(ij)}$ does not depend on the choice of $k,\ell$.
    
    Finally, take $x,y,w,z=1,2,3,4$. One has:
    \[\mathcal{L}_{(12)}\le B_{(123)} \text{ and } \mathcal{L}_{(45)}\le B_{(456)}.\]
    The \hyperlink{GEO}{Revisited Geometrisation Lemma} implies $\dim (\mathcal{L}_{(12)}\cap \mathcal{L}_{(45)}) \le \dim(B_{(123)}\cap B_{(456)})=0$.
\end{proof}
\subsubsection{Weaker claim 1}
We now show a weaker version of Claim \ref{elemma:cl:L} that will allow us to conclude. 
\begin{Weak}
    For $i,j,k,\ell,a,b$ all distinct:
    \[\dim (\ad_{(ij)(k\ell)}(B_{(ija)})\cap \ad_{(ij)(k\ell)}(B_{(ijb)}))\ne 0 .\]
\end{Weak}
\begin{proof}
    Take $i,j,k,\ell,a,b=1,2,4,5,3,6$. We reason by contradiction; suppose 
    \[\dim (\ad_{(ij)(k\ell)}(B_{(ija)})\cap \ad_{(ij)(k\ell)}(B_{(ijb)}))= 0.\]
    We start by setting up some facts towards the contradiction.
    \begin{enumerate}[label=(\roman*)]
        \item\label{1:Weak} $\dim \mathcal{L}_{(12)}=1$. We reason again by contradiction. Then, Lemma~\ref{Oldlines} shows $\dim\mathcal{L}_{(12)}=2$. Therefore, $B_{(123)}=B_{(126)}$. But Lemma~\ref{DIMW}~\ref{4:DIMW} proves $\dim (\ad_{(12)(45)}(B_{(126)}))\ne 0$. This is a contradiction.
        
        \item\label{2:Weak} $s:=\dim (\ad_{(12)(45)}(B_{(126)}))= 1$. We reason again by contradiction. Since $\ad_{(12)(45)}(B_{(126)}) \le B_{(126)}$, one has $s\le 2$. In addition, Lemma~\ref{DIMW}~\ref{4:DIMW} yields $s=2$. Applying $(45)(36)$ shows that:
        \[s=\dim(\ad_{(12)(45)}(B_{(126)}))=\dim(\ad_{(12)(45)}(B_{(123)})).\]  
        Furthermore, dim-connectedness and equality of dimensions imply:
            \begin{align*}
                \ad_{(12)(45)}(B_{(126)}) &=B_{(126)},\\
                \ad_{(12)(45)}(B_{(123)}) &=B_{(123)}.
            \end{align*}
            In addition, Lemma~\ref{Oldlines} yields:
            \[\dim (\ad_{(ij)(k\ell)}(B_{(ija)})\cap \ad_{(ij)(k\ell)}(B_{(ijb)}))=\dim (B_{(126)} \cap B_{(123)} )=\dim(\mathcal{L}_{(12)})\ne 0 .\]
            Hence, we have a contradiction.

        \item\label{3:Weak}$\dim(\mathcal{L}_{(12)} \cap \ad_{(12)(45)}(B_{(123)}) )= 0$. We reason again by contradiction. Then, since both terms of the intersection have dimension 1 (\ref{1:Weak}, \ref{2:Weak}) and $\ad_{(12)(45)}(B_{(123)})$ is dc, $\mathcal{L}_{(12)}^{\dc}=\ad_{(12)(45)}(B_{(123)})$. Apply $(45)(36)$ and write $\mathcal{L}_{(12)}=B_{(123)}\cap B_{(126)}$ to see that $\mathcal{L}_{(12)}^{\dc}=\ad_{(12)(45)}(B_{(126)})$. Therefore, combined with \ref{1:Weak}, one has:
        \[\dim (\ad_{(12)(45)}(B_{(126)})\cap \ad_{(12)(45)}(B_{(123)}))= \dim \mathcal{L}_{(12)}^{\dc}=1\ne 0.\]
        Hence, a contradiction arises. Therefore:
        \[\dim(\mathcal{L}_{(12)}\cap \ad_{(12)(45)}(B_{(123)}))=0.\]
         In particular, one has:
        \[ \dim(\ad_{(12)(45)}(\mathcal{L}_{(12)}))=\dim(\ad_{(12)(45)}(B_{(123)}\cap B_{(126)}))= 0.\]
    \end{enumerate}
    We are still after a contradiction; consider two cases.
    \begin{itemize}
        \item If $q=2$. Then, \ref{1:Weak}, \ref{2:Weak} and \ref{3:Weak} combined yield:
        \[B_{(123)}=\mathcal{L}_{(12)} \qoplus \ad_{(12)(45)}(B_{(123)}).\] 
         Furthermore, applying $\ad_{(12)(45)}$, which is an involution since $q=2$, one has $\ad_{(12)(45)}(B_{(123)})=0$. But by Lemma~\ref{DIMW}~\ref{4:DIMW}, $\dim (\ad_{(12)(45)}(B_{(123)}))\ne0$. Hence, we have a contradiction.

        \item Suppose now $q\ne 2,3$. We start by showing $\dim C_{(12)(45)}=2$. Item \ref{1:Weak} and Lemma~\ref{DIMW}~\ref{1:DIMW} imply  $\dim(B_{(123)} + B_{(126)})=3$. Therefore, Lemma~\ref{DIMW}~\ref{3:DIMW} shows:
        \[V=B_{(123)} + B_{(126)}+ B_{(456)}; \] 
        leading to:
        \[\dim((B_{(123)} + B_{(126)})\cap B_{(456)})=0.\] 
        Since:
        \[\dim (\ad_{(ij)(k\ell)}(B_{(ija)})\cap \ad_{(ij)(k\ell)}(B_{(ijb)}))= 0,\]
        one has:
        \[B_{(12)(45)}=\left( \ad _{(12)(45)}(B_{(123)}) \qoplus \ad_{(12)(45)}(B_{(126)})\right)\qoplus \ad_{(12)(45)}(B_{(456)}). \] 
        This, combined with Lemma~\ref{DIMW}~\ref{2:DIMW} and Fact~\ref{fact.Coprimality}, yields $\dim B_{(12)(45)}=3$ and $\dim C_{(12)(45)}=2$.
        
        Now, $(12)(45)$ inverts $(123)$ and $(126)$ and $q\ne 2$. Therefore, Fact~\ref{fact.Coprimality} implies:
        \[\mathcal{L}_{(12)}=\ad_{(12)(45)}(\mathcal{L}_{(12)}) \qoplus \tr _{(12)(45)}(\mathcal{L}_{(12)}).\]
        But \ref{3:Weak} yields $\dim (\ad_{(12)(45)}(\mathcal{L}_{(12)}))=0$. Thus, one has $\dim (\tr_{(12)(45)}(\mathcal{L}_{(12)}))=1$ and it is in fact the dc-component $\mathcal{L}_{(12)}^{\dc}$ of $\mathcal{L}_{(12)}$. Then, $2$-transitivity of $\Alt(6)$ implies for $x,y,w,z,h,j$ all distinct:
    \[\mathcal{L}_{(xy)}^{\dc}= \tr_{(xy)(wz)}(\mathcal{L}_{(xy)}).\]
    Furthermore, dim-connectedness and Lemma~\ref{Oldlines} imply:
    \[C_{(xy)(wz)}= \mathcal{L}_{(xy)}^{\dc} \qoplus \mathcal{L}_{(wz)}^{\dc} \text{ and } \mathcal{L}_{(hj)}^{\dc} \le C_{(xy)(wz)}.\]
    Since the sum of all $C_\beta$ where $\beta$ is a bi-transposition is $\Alt(6)$-invariant and dim-connected, it is $V$. Thus, the sum of all $\mathcal{L}_{(ij)}^{\dc} $ is $V$. Finally, 
    \[ \mathcal{L}_{(12)}^{\dc} +\mathcal{L}_{(45)}^{\dc} +\mathcal{L}_{(36)}^{\dc} +\mathcal{L}_{(24)}^{\dc} +\mathcal{L}_{(53)}^{\dc} \]
    has dimension at most 4. But it contains the sum of all $\mathcal{L}_{(ij)}^{\dc}$. This is a contradiction. \qedhere
        \end{itemize}
    \end{proof}
\subsubsection{Proof of Claim 1 and the Revisited Extension Lemma.}
\begin{proof}
    Let $\alpha=(ij)(ab),\gamma=(ijk),\lambda=(ij\ell)$. Lemma~\ref{DIMW}~\ref{1:DIMW} yields $\dim( B_\gamma)=2$. Combined with the \hyperlink{Wclaim}{Weaker Claim 1}, $\dim(\ad_\alpha(B_\gamma)\cap \ad_\alpha(B_\lambda)\in \{1,2\} $. Then, in both cases $\ad_\alpha(B_\gamma)=\ad_\alpha(B_\lambda)\le \mathcal{L}_{(ij)}$. The rest follows from the properties of the alternative lines.
\end{proof}
\subsection{Assembling the theorem}
\begin{proof}[Proof of the theorem]
Let $V$ be a faithful, dc-irreducible $\Alt(6)$-module of minimal dimension $d$ and characteristic $q\ne 3$. By Fact \ref{MinDim}, $d=2$ if $q=2$ and $d=5$ if $q\ne 2,3$. Apply in order the \hyperlink{GEO}{Revisited Geometrisation Lemma} to enforce the stronger geometric condition ($[A_{\gamma^\perp},B_\gamma]=0$ for every 3-cycle $\gamma$), the Extension lemma for $n=6$ (\hyperlink{Extension}{Revisited Extension Lemma}) to extend the action and the Recognition Lemma~(Fact \ref{RL}) to conclude the identification. 
\end{proof}
\section{Acknowledgments}
Thanks to my family for their education and support throughout my life. I would also like to thank Adrien Deloro for the multiple revisions and corrections of the work as well as his tutoring, Simon André for his tutoring, and Joshua Wiscons for his advice. 

This work was done entirely by humans.
\section{References}
\printbibliography[heading=none]
\end{document}